\theoremstyle{definition}
\newtheorem{definition}{Definition}[section]
\theoremstyle{theorem}
\newtheorem{theorem}{Theorem}[section]
\theoremstyle{lemma}
\newtheorem{lemma}{Lemma}[section]
\theoremstyle{proposition}
\newtheorem{proposition}{Proposition}[section]
\theoremstyle{remark}
\newtheorem*{remark}{Remark}
\theoremstyle{corollary}
\newtheorem{corollary}{Corollary}
\theoremstyle{problem}
\newtheorem{problem}{Problem}
\theoremstyle{hypothesis}
\newtheorem{hypothesis}{Conjecture}
\begin{document}
\title{Fragments of \sf{IOpen}}

\author{Konstantin Kovalyov}

\affil{
Phystech School of Applied Mathematics and Computer Science, Moscow Institute of Physics and Technology, Moscow, Russia

kovalev.ka@phystech.edu
}

\maketitle

\begin{abstract}
    In this paper we consider some fragments of $\mathsf{IOpen}$ (Robinson arithmetic $\mathsf Q$ with induction for quantifier-free formulas) proposed by Harvey Friedman and answer some questions he asked about these theories. We prove that $\mathsf{I(lit)}$ is equivalent to $\mathsf{IOpen}$ and is not finitely axiomatizable over $\mathsf Q$, establish some inclusion relations between $\mathsf{I(=)}, \mathsf{I(\ne)}, \mathsf{I(\leqslant)}$ and $\mathsf{I} (\nleqslant)$. We also prove that the set of diophantine equations solvable in models of $\mathsf I (=)$ is (algorithmically) decidable. 
\end{abstract}

\section{Introduction}
 Recall that $\mathsf{IOpen}$ consists of Robinson arithmetic $\mathsf{Q}$ with the induction schema for all quantifier free formulas. We assume that $\leq$ is a symbol in the signature of $\mathsf{Q}$. In December 2021, Harvey Friedman posed some interesting questions about $\mathsf{IOpen}$ \cite{fom919}. To formulate Friedman's questions, let us introduce these fragments of $\mathsf{IOpen}$: $\mathsf{I(lit)}$ is $\mathsf Q$ with induction schema for all atomic formulas and negated atomic formulas, $\mathsf{I(=)}$ is $\mathsf Q$ with induction schema for all formulas of the form $t = s$, where $t$ and $s$ are arithmetic terms, in the similar way are defined theories $\mathsf{I(\ne)}, \mathsf{I(\leqslant)}, \mathsf{I(\nleqslant)}$.
His questions concern relations between the following fragments with weaker induction: $\mathsf{I(lit)}$, $\mathsf{I(=)}, \mathsf{I(\ne)}, \mathsf{I(\leqslant)}$ and $\mathsf{I(\nleqslant)}$. 

Friedman stated the following questions:

\begin{enumerate}
    \item Is $\mathsf{I(lit)}$ weaker then $\mathsf{IOpen}$?
    \item What are relationships between $\mathsf{I(=)}, \mathsf{I(\ne)}, \mathsf{I(\leqslant)}, \mathsf{I(\nleqslant)}$?
    \item Are there interesting theorems that are equivalent to $\mathsf{I(lit)}$ over $\mathsf{Q}$?
\end{enumerate}

It is also interesting to consider theories $\mathsf{I(=, \ne)}$ and $\mathsf{IOpen(=)}$ (induction for quantifier-free formulas, containing only atomic formulas of the form $s = t$) and ask a similar question about their equivalence. 

In addition to these questions, we can also try to figure out decidability of set of Diophantine equations, that have a solution in theory $\mathsf{T}$, where $\mathsf{T}$ is one of our theories. Formally, this set is $D(\mathsf{T}) = \{(s, t) | \exists \mathcal M \vDash \mathsf{T} + \exists \vec{x} (s(\vec x) = t(\vec x))\}$. There are some results in this area:
\begin{itemize}
    \item $D(\mathsf Q)$ is decidable (see \cite{jerabek2016})
    \item Decidability of $D(\mathsf{IOpen})$ is not proved yet, there are partial results (see \cite{wilkie1978}, \cite{van_den_dries1980}, \cite{otero1990})
\end{itemize}

In Section 2 we prove that $\mathsf{IOpen} \equiv \mathsf{I(lit)}$ and $\mathsf{I(lit)}$ is not finitely axiomatizable, which answers questions 1 and 3 in the negative. 

In Section 3 we establish the following facts about the relationships of our weak fragments:

\begin{itemize}
    \item $\mathsf I(=) \nvdash \mathsf I(\ne), \mathsf I (\leqslant), \mathsf I(\nleqslant)$,
    \item $\mathsf I(\ne) \nvdash \mathsf I(\leqslant), \mathsf I(\nleqslant)$,
    \item $\mathsf I(\leqslant) \nvdash \mathsf I(=), \mathsf I(\ne), \mathsf I(\nleqslant)$,
    \item $\mathsf I(\ne) + \forall x \forall y (x + y = y + x) \vdash \mathsf I(=)$.
\end{itemize}

We show that $D(\mathsf{I(=)})$ is decidable and that $\mathsf{I(=)}$ proves $Th_=(\mathbb N)$ (all true identities in $\mathbb N$), but $\mathsf Q + Th_=(\mathbb N) \nvdash \mathsf{I(=)}$.

In Section 4 we state the problems remaining open.

\section{Preliminaries}

\begin{definition} [Robinson arithmetic]
    \textit{Robinson arithmetic} $\mathsf{Q}$ consists of the following axioms in the arithmetical language $\mathcal{L}_{ar} = (0, S, +, \cdot, \leqslant)$:
    
    \begin{enumerate}
        \item[(Q1)] $Sx \ne 0$
        \item[(Q2)] $Sx = Sy \rightarrow x = y$
        \item[(Q3)] $x \ne 0 \rightarrow \exists y (x = Sy)$
        \item[(Q4)] $x + 0 = x$
        \item[(Q5)] $x + Sy = S(x + y)$
        \item[(Q6)] $x \cdot 0 = 0$
        \item[(Q7)] $x \cdot Sy = x \cdot y + x$
        \item[(Q8)] $x \leqslant y \leftrightarrow \exists r (r + x = y)$
    \end{enumerate}
\end{definition}

\begin{definition}
    $\mathsf{IOpen}$ consists of $\mathsf Q$ and the induction schema for all quantifier free formulas in the language $\mathcal L_{ar}$, $\mathsf{I(lit)}$ consists of $\mathsf Q$ and induction schema for all literals in the language $\mathcal L_{ar}$ (i.e. atomic formulas and their negations). In the similar way we can define $\mathsf{I(=)}$, $\mathsf{I(\ne)}, \mathsf{I(\leqslant)}$ and $\mathsf{I(\nleqslant)}$.
\end{definition}

\begin{proposition}[\cite{hajek_pudlak_2017}, Theorem 1.10]
    The following formulas are provable in $\mathsf{IOpen}$:
    \begin{enumerate}
        \item[(1)] $x + y = y + x$,
        \item[(2)] $x + (y + z) = (x + y) + z$,
        \item[(3)] $x \cdot y = y \cdot x$,
        \item[(4)] $x(y + z) = x y + x z$,
        \item[(5)] $x(y z) = (x y) z$,
        \item[(6)] $x + y = x + z \rightarrow y = z$,
        \item[(7)] $x \leqslant y \vee y \leqslant x$,
        \item[(8)] $x \leqslant y \wedge y \leqslant x \rightarrow x = y$,
        \item[(9)] $(x \leqslant y \wedge y \leqslant z) \rightarrow x \leqslant z$,
        \item[(10)] $x \leqslant y \leftrightarrow x + z \leqslant y + z$,
        \item[(11)] $(z \ne 0 \wedge x z = y z) \rightarrow x = y$,
        \item[(12)] $z \ne 0 \rightarrow (x \leqslant y \leftrightarrow x z \leqslant y z)$.
    \end{enumerate}
\end{proposition}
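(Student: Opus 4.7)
The plan is to derive the twelve identities in dependency order, each by an induction on an explicitly quantifier-free formula (so that $\mathsf{IOpen}$ applies), with the remaining variables treated as free parameters. The key obstacle at the very start is that $\mathsf Q$ only provides the recursion clauses Q4--Q7 on the \emph{right} argument of $+$ and $\cdot$, so before any of (1)--(5) can be attacked one first proves the left-hand auxiliaries $0 + x = x$ and $Sx + y = S(x+y)$ by induction on $x$ and $y$ respectively, and analogously $0 \cdot x = 0$ and $Sx \cdot y = x \cdot y + y$.

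With these in hand, commutativity (1) and associativity (2) of $+$ are routine inductions. Cancellation (6) is proved by induction on $x$ with $y, z$ as parameters: the step reduces $Sx + y = Sx + z$ to $S(x+y) = S(x+z)$ and then invokes Q2. For the multiplicative identities (3)--(5), distributivity (4) comes first by induction on $z$ using (1), (2) and the left auxiliaries; commutativity (3) and associativity (5) of $\cdot$ then follow by further inductions using (4).

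For the order identities we exploit that $\leqslant$ is a primitive symbol, so every Boolean combination involving it is itself quantifier-free and eligible for induction; Q8 is only unfolded locally to extract or supply witnesses. Totality (7) is proved by induction on $x$: in the step, if $r + x = y$ witnesses $x \leqslant y$, then either $r = 0$ (so $y = x$ and hence $y \leqslant Sx$) or $r = Su$ (so $u + Sx = y$, giving $Sx \leqslant y$); the symmetric case merely bumps the witness by $S$. Antisymmetry (8) combines (6) with the witnesses; transitivity (9) is immediate by adding witnesses; and the monotonicities (10) and (12) follow from (6) together with Q8. The main technical hurdle is multiplicative cancellation (11): assume $xz = yz$ with $z \ne 0$ but $x \ne y$; by (7) one has without loss of generality $x \leqslant y$, hence $y = x + r$ with $r \ne 0$, so by (4) $yz = xz + rz$, whence $rz = 0$ by (6). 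Writing $r = Sr'$ and $z = Sz'$ (via Q3) and expanding via (5), (1), Q5, Q7 shows $Sr' \cdot Sz'$ is a successor and hence nonzero by Q1, a contradiction. Each individual step is a routine induction on an open formula; the only care required is to keep every induction formula literally quantifier-free, which is automatic because $\leqslant$ lies in the signature.
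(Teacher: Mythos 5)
The paper itself does not prove this proposition; it cites it directly from H\'ajek--Pudl\'ak. The closest thing to ``the paper's own proof'' is the proof of Proposition~2.1, where the same twelve facts are re-derived in the strictly weaker theory $\mathsf{I(lit)}$, and it is worth comparing your argument to that. Your proof is correct for $\mathsf{IOpen}$ and follows the standard textbook route, but at the two places where the comparison is interesting you choose induction formulas that are open yet not literals. For~(6) you induct on the implication $x+y=x+z \rightarrow y=z$ (or equivalently on $y,z$ as parameters with $\neq$ implicitly in play), whereas the paper's $\mathsf{I(lit)}$ argument must instead fix $y\ne z$ and induct on the single literal $x+y\ne x+z$. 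For~(7) you induct on the disjunction $x\leqslant y \vee y\leqslant x$, which is open but not a literal; the paper instead fixes a hypothetical $x,y$ with $x\nleqslant y$ and $y\nleqslant x$ and inducts on the literal $x\nleqslant y+z$, then specialises $z:=x$ to get a contradiction. This is the genuine technical content of the paper's Proposition~2.1 that your proof does not touch, and it is why the paper's version is a new result while yours reproduces the cited one. Each approach buys what you'd expect: yours is shorter and uses the full strength of open induction; the paper's is more delicate but establishes the facts from literal induction alone, which is what is needed for the $\mathsf{I(lit)}\vdash\mathsf{IOpen}$ theorem.

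One small ordering slip: you list the auxiliary $Sx\cdot y = x\cdot y + y$ among the lemmas to be proved ``before any of (1)--(5) can be attacked,'' but the induction step for that identity already needs commutativity and associativity of $+$ (to rearrange $(xy+y)+Sx$ into $(xy+x)+Sy$). So the correct dependency order is: $0+x=x$ and $Sx+y=S(x+y)$ first, then (1) and (2), and only then $0\cdot x=0$ and $Sx\cdot y=x\cdot y+y$, followed by (4), (3), (5). This does not affect the soundness of the overall argument, but as written the sentence overstates what can be done before addition is tamed.
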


\begin{remark}
    (1)-(5) can be proven in $\mathsf{I(=)}$.
\end{remark}

All rings and semirings in this paper will be commutative, associative with identity. Usually, structures will be denoted by calligraphic letters (such as $\mathcal{M, F, R}, \dots$), and their domains will be denoted by $M, F, R, \dots$. 

\begin{definition}
    Let $\mathcal M$ be a ring (semiring), $\leqslant$ be a linear order on $\mathcal M$. Then $(\mathcal M, \leqslant)$ is called \textit{an ordered ring} if $\forall x, y, z \in M (x \leqslant y \leftrightarrow x + z \leqslant y + z)$ and $\forall x, y, z \in M, z > 0 (x \leqslant y \leftrightarrow x \cdot z \leqslant y \cdot z)$. An ordered ring (semiring) is called \textit{discretely ordered} if $1$ is the least positive element (or, equivalently, there is no elements between $0$ and $1$).
\end{definition}

\begin{definition}
    Let $\mathcal{M} \subseteq \mathcal{R}$ be two ordered rings (with the same orderings) and $\mathcal M$ be discretely ordered. Then $\mathcal M$ is \textit{an integer part} of $\mathcal R$ if $\forall r \in R \exists m \in M (m \leqslant r < m + 1)$. Such an $m$ is called the integer part of $r$. Notation: $\mathcal M \subseteq^{IP} \mathcal R$.
\end{definition}

\begin{remark}
    Since $\mathcal M$ is discretely ordered, for every $r \in R$ its integer part is uniquely defined.
\end{remark}

\begin{theorem}[\cite{shepherdson1964}]
    Let $\mathcal{M}$ be a discretely ordered ring, $\mathcal M^+$ be the non-negative part of $\mathcal M$. Then, $\mathcal{M}^+ \vDash \mathsf{IOpen}$ iff $\mathcal M \subseteq^{IP} R(\mathcal M)$, where $R(\mathcal M)$ is the real closure of the ordered fraction field of $\mathcal M$.
\end{theorem}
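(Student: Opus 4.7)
The plan is to prove both directions of the biconditional separately. Write $K(\mathcal M)$ for the ordered fraction field of $\mathcal M$ and $R(\mathcal M)$ for its real closure; every $r \in R(\mathcal M)$ is algebraic over $K(\mathcal M)$ and, after clearing denominators, satisfies some nonzero $p(X) \in \mathcal M[X]$.

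For the easier direction ($\Leftarrow$), assume $\mathcal M \subseteq^{IP} R(\mathcal M)$ and let $\varphi(x, \vec a)$ be a quantifier-free formula with parameters $\vec a \in \mathcal M^+$. After rewriting and moving terms across $\mathcal M$, $\varphi$ becomes a Boolean combination of conditions $p_i(x) \bowtie_i 0$ with $p_i(X) \in \mathcal M[X]$ and $\bowtie_i \in \{=, \neq, \leqslant, \nleqslant\}$. Each $p_i$ has finitely many roots in $R(\mathcal M)$ and constant sign between consecutive ones, so by hypothesis the truth set $S = \{n \in \mathcal M^+ : \mathcal M^+ \vDash \varphi(n, \vec a)\}$ is a finite union of intervals of $\mathcal M^+$ whose endpoints lie in $\mathcal M$. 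If $\varphi(0, \vec a)$ holds but $\neg\varphi(n, \vec a)$ holds for some $n$, then $S$ has a first gap at some $m+1$, yielding $\varphi(m, \vec a) \wedge \neg\varphi(m+1, \vec a)$; this refutes the induction premise for $\varphi$, so open induction holds and $\mathcal M^+ \vDash \mathsf{IOpen}$.

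For the harder direction ($\Rightarrow$), assume $\mathcal M^+ \vDash \mathsf{IOpen}$ and take $r \in R(\mathcal M)$, WLOG $r \geqslant 0$. Pick nonzero $p(X) \in \mathcal M[X]$ with $p(r)=0$; a Cauchy-type bound computed from the coefficients of $p$ gives some $N \in \mathcal M^+$ with $r < N$. I would then induct (externally) on $\deg p$: in degree $1$, the integer part of $r = b/a$ is produced by Euclidean division of $b$ by $a$, which is provable in $\mathsf{IOpen}$. For higher degree, apply the inductive hypothesis to $p'$ (whose coefficients still lie in $\mathcal M$) to put the integer parts of all critical points of $p$ into $\mathcal M$; between two consecutive critical points $p$ is monotone, so it has at most one root there, and its integer part is pinned down by applying open induction to the sign of $p$ inside the subinterval, producing the required $m$ with $p(m) \cdot p(m+1) \leqslant 0$.

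The main obstacle is this forward direction: one must verify that open induction in $\mathcal M^+$ really suffices to drive the root-localization, because the informal statement ``$p$ has a root in $[m, m+1)$'' must be expressed by a genuinely open formula on which induction is permitted, and the case analysis needed to glue the inductive step on $p'$ with the monotone-subinterval argument for $p$ has to remain inside the open fragment. The cleanest packaging is Shepherdson's original argument via Sturm sequences, which quantifier-freely counts real roots of a polynomial in a given interval and hence lets one define the integer part of $r$ by a single open formula; either way, the technical heart of the proof is confirming that all the definability and case analyses stay within open formulas over $\mathcal M$.
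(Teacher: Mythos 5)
The paper does not prove this theorem; it is quoted from Shepherdson \cite{shepherdson1964} and used as a black box (via Corollary 2.1). So there is no in-paper proof to compare your attempt against. That said, your sketch is essentially the standard argument and is, in outline, correct, so let me comment on the two directions and on the genuine gap you yourself flag.

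Your $(\Leftarrow)$ direction is fine. The one small imprecision is the phrase ``whose endpoints lie in $\mathcal M$'': the truth set of $\varphi$ in $R(\mathcal M)$ is a finite union of intervals whose endpoints are roots of the $p_i$ in $R(\mathcal M)$, and it is the integer-part hypothesis that lets you replace each such endpoint by an element of $\mathcal M$ when intersecting with $\mathcal M^+$. With that understood, the ``first gap'' argument is correct, and one should also note that the empty-truth-set and all-of-$\mathcal M^+$ cases are handled trivially.

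For $(\Rightarrow)$, the gap you identify is real but repairable, and your fix (Sturm sequences, or derivatives plus monotonicity) is one of the standard ones. A point that deserves to be made explicit: when you apply open induction to a sign condition $g(x) < 0$ (where $g(x) := q^{\deg p}\, p\bigl((x + a')/q\bigr)$ after shifting and clearing denominators, with $a'/q$, $b'/q$ rationals bracketing the sole root in the monotone subinterval), the witness $m$ with $g(m)<0 \wedge g(m+1)\geqslant 0$ produced by $\mathsf{IOpen}$ could a priori land outside $[0, b'-a']$; you must conjoin a bound, e.g.\ use the open formula $g(x) < 0 \vee \neg(x \leqslant b'-a')$, to force the sign change to occur inside the subinterval where you have monotonicity. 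Your external induction on $\deg p$ via $p'$ is the right structure, and a Cauchy-type bound of the shape $1 + |a_{n-1}| + \dots + |a_0|$ (avoiding division) gives $N \in \mathcal M^+$ bounding $r$. It is worth noting that the paper's Theorem 2.1 establishes the $(\Rightarrow)$ direction for the weaker theory $\mathsf{I(lit)}$: the argument there is necessarily more delicate because only literal induction is available, and it compensates with the ``kill a sign change by multiplying by a linear factor'' trick of Lemma 2.2; your version can sidestep that machinery because you have full open induction, which is the correct simplification for the present statement.
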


\section{$\mathsf{IOpen} \equiv \mathsf{I(lit)}$ and $\mathsf{I(lit)}$ is not finitely axiomatizable}

\begin{proposition}
    Statements (1)-(12) from Proposition 1.1 are provable in $\mathsf{I(lit)}$.
\end{proposition}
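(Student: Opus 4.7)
The plan is to verify each of the twelve statements using only $\mathsf{I(lit)}$-induction. By the remark, (1)--(5) already follow from $\mathsf{I(=)} \subseteq \mathsf{I(lit)}$, so I focus on (6)--(12), with the technical heart concentrated in (7).

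For (6) I would prove the contrapositive $\forall y, z, x\, (y \ne z \to x + y \ne x + z)$: fix $y, z$ with $y \ne z$ as parameters and apply $\mathsf{I(\ne)}$-induction on $x$ to the literal $\varphi(x) := x + y \ne x + z$. The base $0 + y \ne 0 + z$ reduces to $y \ne z$ via the $\mathsf{I(=)}$-identity $0 + y = y$, and the step uses the contrapositive of Q2 together with $Sx + y = S(x + y)$. Reflexivity $x \leqslant x$, transitivity (9), and the forward direction of (10) are immediate from Q8 combined with $\mathsf{I(=)}$-provable associativity and $0 + x = x$; antisymmetry (8) follows from (6) and the $\mathsf{Q}$-fact $r + s = 0 \to r = s = 0$ (via Q1 and Q3), while the reverse direction of (10) is again (6).

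The main challenge is (7): the disjunction $x \leqslant y \vee y \leqslant x$ is not itself a literal, so it cannot serve directly as an induction formula in $\mathsf{I(lit)}$. My plan is to push the disjunctive structure out of the induction formula and into the proof itself, using excluded middle and Q3 as case-analysis tools. In an induction on $y$ with $x$ fixed, the step would classically split on whether $x \leqslant y$ or $y \leqslant x$ holds and, in the latter case, on whether the witness $r$ in $r + y = x$ is $0$ (so $y = x$ and $x \leqslant Sy$) or $Sr'$ (so $r' + Sy = x$ and $Sy \leqslant x$). The subtlety, and the main obstacle I anticipate, is selecting an induction literal whose $\mathsf{I(lit)}$-induction delivers exactly the invariant needed by this case analysis rather than the raw trichotomy disjunction; some ``bundled'' reformulation as a single $\leqslant$- or $\nleqslant$-literal will likely be required.

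Once (7) is established, (11) follows by trichotomy: given $xz = yz$ and $z \ne 0$, WLOG $x \leqslant y$ with $r + x = y$, so $rz + xz = yz = xz$, whence $rz = 0$ by (6), and then the $\mathsf{Q}$-provable lemma $rz = 0 \to r = 0 \vee z = 0$ (a consequence of Q1, Q3, Q5, Q7) together with $z \ne 0$ forces $r = 0$, hence $x = y$. Finally, (12) follows routinely from (7), (8), (10), (11): the forward direction is distributivity, and the reverse direction is a contrapositive via trichotomy, antisymmetry, and (11). The only genuinely novel step throughout is (7); everything else is standard bookkeeping in $\mathsf{Q}$ and $\mathsf{I(=)} / \mathsf{I(\ne)}$.
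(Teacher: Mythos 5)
Your treatment of (1)--(6) and (8)--(12) matches the paper's: the contrapositive induction on $x$ with the literal $x + y \ne x + z$ for (6), and the routine deductions of (8)--(12) from (6), (7), Q8, and $\mathsf{Q}$-provable facts, are all correct. (Your direct derivation of the converse of (10) from (6) alone is in fact slightly cleaner than the paper's, which detours through (7) and (8).)

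The genuine gap is (7), which you correctly flag as the crux but do not resolve. Your sketch --- induct on $y$ with $x$ fixed and find a ``bundled'' literal encoding the disjunction $x \leqslant y \vee y \leqslant x$ --- leaves the central difficulty open: there is no evident single literal in $y$ (with $x$ as parameter) whose $\mathsf{I(lit)}$-induction yields the trichotomy, since the invariant is irreducibly disjunctive. The paper's resolution is a reductio combined with an \emph{auxiliary} induction variable. Suppose, for contradiction, that both $x \nleqslant y$ and $y \nleqslant x$. Fixing $x$ and $y$ as parameters, prove $\forall z\, (x \nleqslant y + z)$ by induction on $z$ with the $\nleqslant$-literal $x \nleqslant y + z$. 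The base $z = 0$ is $x \nleqslant y + 0 = y$, which is the first hypothesis. For the step, if $x \leqslant y + Sz$, then $r + x = y + Sz$ for some $r$; if $r = 0$, then $x = y + Sz$ gives $y \leqslant x$, contradicting the second hypothesis, and if $r = Sr'$, then $S(r' + x) = S(y + z)$ gives $r' + x = y + z$, hence $x \leqslant y + z$, contradicting the induction hypothesis. From $\forall z\, (x \nleqslant y + z)$, instantiating $z := x$ yields $x \nleqslant y + x$, which is absurd since $y + x = x + y$ already witnesses $x \leqslant y + x$. The one idea missing from your proposal is exactly this move: trading the disjunctive trichotomy for a single negated-atomic induction formula in a fresh variable, working under a reductio hypothesis on $x$ and $y$.
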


\begin{proof}
    Recall these formulas:
    
    \begin{enumerate}
        \item[(1)] $x + y = y + x$,
        \item[(2)] $x + (y + z) = (x + y) + z$,
        \item[(3)] $x \cdot y = y \cdot x$,
        \item[(4)] $x(y + z) = x y + x z$,
        \item[(5)] $x(y z) = (x y) z$,
        \item[(6)] $x + y = x + z \rightarrow y = z$,
        \item[(7)] $x \leqslant y \vee y \leqslant x$,
        \item[(8)] $x \leqslant y \wedge y \leqslant x \rightarrow x = y$,
        \item[(9)] $(x \leqslant y \wedge y \leqslant z) \rightarrow x \leqslant z$,
        \item[(10)] $x \leqslant y \leftrightarrow x + y \leqslant x + z$,
        \item[(11)] $(z \ne 0 \wedge x z = y z) \rightarrow x = y$,
        \item[(12)] $z \ne 0 \rightarrow (x \leqslant y \leftrightarrow x z \leqslant y z)$.
    \end{enumerate}

    As noted in Remark after Proposition 1.1, (1)-(5) are provable in $\mathsf{I(=)}$. We outline the proofs of (6)-(12).
    
    \begin{enumerate}
        
    \item[(6)] $x + y = x + z \rightarrow y = z$.
     
    Suppose $y \neq z$. We prove by induction on $x$ the statement $x + y \neq x + z$.
    
    If $x = 0$, $0 + y = y \neq z = 0 + z$ (here we used commutativity of addition and Q4).
    
    Let $x + y \neq x + z$. Then, $S x + y = S(x + y) \neq S(x + z) = S x + z$ (here we used commutativity, Q2 and Q5).
    
    \item[(7)] $x \leqslant y \vee y \leqslant x$.
    
    Suppose there exist $x, y$ such that $x \nleqslant y$ and $y \nleqslant x$. We prove $x \nleqslant y + z$ by induction on $z$.
    
    If $z = 0$, then $x \nleqslant y = y + 0$.
    
    Let $x \nleqslant y + z$. Suppose, $x \leqslant y + S z$. Then, there exists an $r$ such that $r + x = y + S z$. If $r = 0$, $x = y + S z$, then, $y \leqslant x$, and we have a contradiction. Let $r = S r'$. $S(r' + x) = S r' + x = y + Sz = S(y + z) \Rightarrow r' + x = y + z$. So, $x \leqslant y + z$, a contradiction.
    
    Now, let $z$ be $x$. Then $x \nleqslant y + x$, a contradiction.
    
    \item[(8)-(9)] Could be easily done, using commutativity and associativity of addition and axioms of $\mathsf Q$.
    
    \item[(10)] $x \leqslant y \leftrightarrow x + z \leqslant y + z$
    
    If $x \leqslant y$, then $r + x = y$ for some $r$, so $r + (x + z) = y + z$ and $x + z \leqslant y + z$.
    
    Suppose, $x + z \leqslant y + z$, but $x \nleqslant y$. By (7), $y \leqslant x$. Since we've already proved the opposite implication, $y + z \leqslant x + z$. Then, by (8), $x + z = y + z$. Using (6), we obtain that $x = y$, so $x \leqslant y$.
    
    \item[(11)] $(z \ne 0 \wedge x z = y z) \rightarrow x = y$
    
    Suppose, $x \ne y$. By (7) we can assume, that, for example, $x \leqslant y$. Then, there is $r \ne 0$ such that $r + x = y$. Suppose, $x z = y z$, where $z \ne 0$. Then, $x z = (r + x) z$, by (6) and distributivity, $r z = 0$, which is impossible, since $z \ne 0$ and $r \ne 0$.
    
    \item[(12)] $z \ne 0 \rightarrow (x \leqslant y \leftrightarrow x z \leqslant y z)$
    
    Suppose, $x \leqslant y$, then $r + x = y$ for some $r$. Then $y z = r z + x z$, so $x z \leqslant y z$.
    
    Using (7), we can prove the opposite implication.
    \end{enumerate}
\end{proof}

So, every model of $\mathsf{I(lit)}$ is a discretely ordered semiring. Let $\mathcal M = (M, +, \cdot, \leqslant, 0, 1)$ be a model of $\mathsf{I(lit)}$. We can extend this semiring to a ring in the following way. Consider pairs $(m, n)$ of elements of our semiring and define the equivalence relation on them: $(m, n) \sim (m', n') \leftrightharpoons m + n' = m' + n$ ($(m, n)$ can be understood as $m - n$). It easy to see that it is an equivalence relation. So, let $\widetilde{M} = M^2 / \sim$ and $\widetilde {\mathcal M} = (\widetilde{M}, ...)$ with the operations defined in an obvious way. It will be a discretely ordered ring and hence an integral domain. Denote by $F(\mathcal M)$ the (ordered) quotient field of $\widetilde{\mathcal M}$, by $R(\mathcal M)$ -- the real closure of $F(\mathcal M)$.

\begin{lemma}
    Let $f \in \widetilde{\mathcal M}[X]$, $f(\frac{a}{q}) \leqslant 0$, $f(\frac{b}{q}) > 0$, $a, b, q \in M$, $a < b$. Then $\exists c \in M: f(\frac{c}{q}) \leqslant 0 \wedge f(\frac{c + 1}{q}) > 0$.
\end{lemma}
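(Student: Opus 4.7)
The plan is to reduce the lemma to one application of the induction schema for a suitable atomic formula. Since $a < b$, I would first fix $d \in M$ with $d \neq 0$ and $a + d = b$, and then work with the one-variable formula $\psi(x) \;:=\; f((a+x)/q) \leqslant 0$: by hypothesis $\psi(0)$ holds and $\psi(d)$ fails.

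The crucial observation is that $\psi(x)$, viewed with $a$, $q$, and the coefficients of $f$ as parameters, is equivalent to a single literal of $\mathcal L_{ar}$. Writing each coefficient of $f$ as $[m_i, n_i] \in \widetilde M$ with $m_i, n_i \in M$, clearing denominators by multiplying through by $q^n$ (which is positive, as $q > 0$), and unwinding the definition of $\leqslant$ on $\widetilde M$, the condition $f((a+x)/q) \leqslant 0$ becomes equivalent to
$$P(x) \leqslant Q(x),$$
where $P(x) = \sum_{i=0}^n m_i (a+x)^i q^{n-i}$ and $Q(x) = \sum_{i=0}^n n_i (a+x)^i q^{n-i}$ are ordinary $\mathcal L_{ar}$-terms in $x$. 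In particular $\psi$ is (equivalent to) an atomic formula, so induction for $\psi$ is available in $\mathsf{I(lit)}$.

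Applying this induction is then a straightforward contradiction: if the step $\forall x(\psi(x) \to \psi(Sx))$ held, then since $\psi(0)$ holds, induction in $\mathsf{I(lit)}$ would yield $\forall x\, \psi(x)$, contradicting $\neg \psi(d)$. So some $x \in M$ witnesses $\psi(x) \wedge \neg \psi(Sx)$, which means $f((a+x)/q) \leqslant 0$ and $f(((a+x)+1)/q) > 0$; setting $c := a + x$ completes the proof.

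The main obstacle will be carefully verifying the reduction in the first step, namely that the apparently composite condition $f((a+x)/q) \leqslant 0$ really collapses to a single literal of $\mathcal L_{ar}$ once the construction of $\widetilde{\mathcal M}$ and its order is unwound; this is where positivity of $q^n$ is used. After that the induction argument itself is entirely routine.
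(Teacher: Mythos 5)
Your argument is correct and is essentially the same as the paper's: the paper also clears denominators by passing to $g(X) := q^n f\bigl(\frac{X+a}{q}\bigr)$ and applies the $\mathsf{I(lit)}$ induction to the literal $g(x) \leqslant 0$, recovering $c$ by translating back by $a$. The one thing you spell out that the paper leaves implicit is the unwinding of the $\widetilde{\mathcal M}$-coefficients into a pair of honest $\mathcal L_{ar}$-terms $P(x) \leqslant Q(x)$ so that the formula is literally a literal, which is a worthwhile clarification but not a different proof.
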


\begin{proof}
    Define $g \in \widetilde{\mathcal M}[X]$ in the following way: $g(X) := q^n f(\frac{X + a}{q})$, where $n = \deg f$. Then $g(0) \leqslant 0$, $g(b - a) > 0$ and $\mathcal M \vDash g(0) \leqslant 0 \wedge \exists c (g(c) > 0)$. Since $\mathcal M \vDash \mathsf{I(lit)}$, $\mathcal M \vDash \exists c (g(c) \leqslant 0 \wedge g(c + 1) > 0)$. Then $f(\frac{c + a}{q}) \leqslant 0$ and $f(\frac{c + a + 1}{q}) > 0$.
\end{proof}

\begin{lemma}
    Let $f \in \widetilde{\mathcal M}[X]$, $f(\frac{a}{q}) < 0$, $f(\frac{b}{q}) > 0$, $a, b, q \in M$, $a < b$ and $f$ has no roots in $F(\mathcal M)$. Then $\exists c \in M: a \leqslant c < b \wedge f(\frac{c}{q}) < 0 \wedge f(\frac{c + 1}{q}) > 0$.
\end{lemma}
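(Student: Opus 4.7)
The plan is to reduce to Lemma 2.1 via the same change of variables $g(X) := q^n f((X+a)/q) \in \widetilde{\mathcal M}[X]$ and then apply $\mathsf{I(lit)}$-induction to a carefully chosen literal, using the no-roots hypothesis to upgrade weak inequalities to strict ones. After translating, $g(0) < 0$, $g(b-a) > 0$, and $g$ has no roots in $F(\mathcal M)$ (a linear change of variables preserves roots). It suffices to find $X_0 \in M$ with $0 \leq X_0 < b - a$, $g(X_0) < 0$, and $g(X_0+1) > 0$; then $c := X_0 + a \in [a, b)$ will satisfy the conclusion.

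To produce such an $X_0$, rather than applying the argument of Lemma 2.1 directly (which yields a transition with no control on its location), I would apply the induction schema to the literal
\[
\psi(Y) \ := \ (b-a-Y)\cdot g(b-a-Y) > 0,
\]
which is atomic in the language of $\mathcal M$ once the polynomial is expanded and each coefficient $\alpha \in \widetilde{\mathcal M}$ is written as a difference $\beta - \gamma$ with $\beta, \gamma \in M$. Note that $\psi(0) = (b-a)\cdot g(b-a) > 0$ while $\psi(b-a) = 0 \not> 0$, so the contrapositive of induction yields $Y \in M$ with $\psi(Y) \wedge \neg\psi(Y+1)$. In the case $0 \leq Y \leq b-a-1$, positivity of the factor $(b-a-Y)$ forces $g(b-a-Y) > 0$ and $g(b-a-Y-1) \leq 0$; the no-roots hypothesis, applied to $(b-a-Y-1)/1 \in F(\mathcal M)$, upgrades this to $g(b-a-Y-1) < 0$, so $X_0 := b-a-Y-1$ is the witness sought.

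The main obstacle is ruling out the case $Y \geq b-a$, where $(b-a-Y) < 0$ flips the sign interpretation of $\psi$ and the induction may produce a transition of $g$ at a negative integer argument rather than one strictly inside $[0, b-a)$. I would address this by combining the above with a symmetric $\mathsf{I(lit)}$-induction applied forward to the literal $g(X) < 0$ starting from $g(0) < 0$, together with the no-roots upgrade: the two resulting witnesses combined with the degree bound on $g$ (which bounds the total number of roots in the real closure, hence the number of sign-changes) pin at least one transition to lie in $[0, b-a-1]$, as $g(0) < 0 < g(b-a)$ guarantees at least one upward sign-change in this interval.
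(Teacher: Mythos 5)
Your reduction to $g(X)=q^n f((X+a)/q)$ and the $\psi$-induction correctly handle the case $0\le Y<b-a$: there $b-a-Y\ge 1$ and $b-a-Y-1\ge 0$, so $\psi(Y)$ gives $g(b-a-Y)>0$, $\neg\psi(Y+1)$ gives $g(b-a-Y-1)\le 0$, and the no-roots hypothesis upgrades the latter to a strict inequality, producing the desired $X_0$.

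However, the case you yourself flag as ``the main obstacle'' ($Y\ge b-a$, and symmetrically the forward-induction witness landing at $X\ge b-a$) is not actually closed by the proposed fix. Having two integer transitions of $g$, both outside $[0,b-a)$, plus a bound $|N(g)|\le\deg g$ on the total number of integer transitions, does not ``pin'' any transition inside $[0,b-a-1]$: nothing prevents \emph{all} the witnesses that a single $\mathsf{I(lit)}$-induction produces from lying outside the target window. Your appeal to ``$g(0)<0<g(b-a)$ guarantees at least one upward sign-change in this interval'' is an IVT statement in the real closure $R(\mathcal M)$; it gives a root $\rho\in(0,b-a)$, but inferring from $\rho$ an \emph{integer} transition $m\in M$, $g(m)<0<g(m+1)$, requires $\rho$ to have an integer part in $\widetilde{\mathcal M}$ — which is exactly what the whole section is trying to establish, so this is circular at this stage. (And even granted an integer part $m\le\rho<m+1$, one need not have $g(m)g(m+1)<0$ if $g$ has several roots between $m$ and $m+1$.)

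What is missing is an argument that forces the transition into $[a,b)$, and the paper supplies one that is structurally different from anything in your plan: an extremal argument over polynomials. One considers the finite set $N(f)=\{m\in M: f(m/q)f((m+1)/q)<0\}$ of all integer transitions (finite because each yields a root of $f$ in $R(\mathcal M)$, so $|N(f)|\le\deg f$), assumes the lemma fails, and picks a counterexample $f$ with $|N(f)|$ minimal. Lemma 2.1 produces a transition $c$, which by assumption lies outside $[a,b)$. Multiplying $f$ by the linear factor $(2c+1)-2qX$ (if $c>b$; symmetrically $2qX-(2c+1)$ if $c<a$), whose unique sign change is at $(2c+1)/(2q)\in(c/q,(c+1)/q)$, preserves the sign of $f$ throughout $[a/q,b/q]$ and the endpoint conditions, preserves the absence of in-range transitions, but destroys the transition at $c$ and creates no new ones, so $|N(\tilde f)|<|N(f)|$, contradicting minimality. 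Your proposal never performs any such elimination step, so the out-of-range case remains a genuine gap.
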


\begin{proof}
    Fix $a, b, q \in M$. Let $N(f) = \{m \in M|f(\frac{m}{q}) f(\frac{m + 1}{q}) < 0\}$. Note that $|N(f)|$ is finite, since for every $m \in N(f)$ there exists a root of $f$ in $R(\mathcal M)$ between $\frac{m}{q}$ and $\frac{m + 1}{q}$. 
    
    Suppose, there is $f \in \widetilde{\mathcal M}[X]$ such that $f(\frac{a}{q}) < 0$, $f(\frac{b}{q}) > 0$, but there is no $c$ between $a$ and $b$ such that $f(\frac{c}{q}) < 0$ and $f(\frac{c + 1}{q}) > 0$. Choose such an $f$ with the minimal $|N(f)|$. By Lemma 2.1 there is a $c \in M$ such that $f(\frac{c}{q}) < 0$ and $f(\frac{c + 1}{q}) > 0$. By the choice of $f$, $c < a$ or $c > b$.
    
    If $c > b$, consider $\tilde f(X) := f(X)((2 c + 1) - 2 q X)$. Then $\tilde f(\frac{a}{q}) = f(\frac{a}{q})(\frac{2 c - 2 a + 1}{q}) < 0$, $\tilde f(\frac{b}{q}) = f(\frac{b}{q})(\frac{2 c - 2 b + 1}{q}) > 0$, there is no such $\tilde c$ between $a$ and $b$ such that $\tilde f(\frac{\tilde c}{q}) \tilde f(\frac{\tilde c + 1}{q}) < 0$ (on $[a, b]$ $\tilde f$ has the same sign as $f$) and $N(f) = N(\tilde f) \setminus \{c\}$. Hence, we have a contradiction with the choice of $f$. If $c < a$, we can consider in a similar way $\tilde f(X) := f(X)(2 q X - (2 c + 1))$.
\end{proof}

\begin{theorem}
    Let $\mathcal M \vDash \mathsf{I(lit)}$. Then $\widetilde{\mathcal M} \subseteq^{IP} R(\mathcal M)$.
\end{theorem}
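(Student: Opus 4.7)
The plan is to split into two cases according to whether $r \in F(\mathcal M)$ and invoke the appropriate preceding lemma in each case. By the sign symmetry $r \leftrightarrow -r$ we may assume $r \geq 0$ throughout (for $r < 0$ one finds an integer part of $-r$ and adjusts by $\pm 1$ depending on whether $-r$ is itself in $\widetilde M$).

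If $r \in F(\mathcal M)$, write $r = p/q$ with $p, q \in \widetilde M$, $q > 0$, and (after an analogous sign reduction on $p$) $p \geq 0$. I would apply Lemma 2.1 to the linear polynomial $h(X) = qX - p \in \widetilde{\mathcal M}[X]$ with endpoints $a = 0$ and $b = p + 1$: we have $h(0) = -p \leq 0$ and $h(p+1) = p(q-1) + q > 0$, so the lemma yields $m \in M$ with $qm \leq p < q(m+1)$, equivalently $m \leq r < m + 1$.

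If $r \notin F(\mathcal M)$, take $f \in \widetilde{\mathcal M}[X]$ to be a nonzero multiple of the minimal polynomial of $r$ over $F(\mathcal M)$ scaled to clear denominators; then $f$ is irreducible of degree $\geq 2$, has no roots in $F(\mathcal M)$, and $r$ is a simple root of $f$, so $f$ changes sign at $r$. Replacing $f$ by $-f$ if necessary, we may assume the transition is from negative to positive at $r$. A crude bound on the size of algebraic numbers in terms of the coefficients of $f$ produces an integer $B \in M$ with every root of $f$ in $(-B, B)$, hence integer endpoints at which $f$ takes the signs required to apply Lemma 2.2.

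\textbf{The main obstacle} is ensuring that the integer $c$ produced by Lemma 2.2 actually corresponds to the specific root $r$ rather than to another root of $f$. I would resolve this by isolating $r$ among the roots of $f$: the minimum separation $\delta = \min_{r' \neq r} |r - r'|$ is a nonzero element of $R(\mathcal M)$, bounded below by $1/C$ for some $C \in M$ obtained from the coefficients of a resolvent polynomial in $\widetilde{\mathcal M}[Y]$ whose roots include the pairwise differences of roots of $f$. Invoking Lemma 2.2 with a denominator $q > C$ then confines the sign-change interval produced to length $1/q < \delta$, forcing it to contain only $r$; the resulting $c/q \in F(\mathcal M)$ satisfies $c/q \leq r < (c+1)/q$, and its own integer part in $\widetilde M$, computed as in Case 1 applied to the rational $c/q$, furnishes the integer part of $r$.
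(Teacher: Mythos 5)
There is a genuine gap in your Case 2, at the step where you assert that invoking Lemma~2.2 with $q > C$ ``forces'' the resulting sign-change interval $[c/q, (c+1)/q]$ to contain $r$. Lemma~2.2 only guarantees \emph{some} $c$ with $a \leqslant c < b$ and a sign change of $f$ on $[c/q,(c+1)/q]$; it says nothing about which root of $f$ that interval straddles. Since you take $[a/q, b/q]$ to be a box $(-B,B)$ containing all roots, $f$ may have several sign changes inside it, and the $c$ produced can land next to a conjugate root $r' \neq r$ whose integer part differs from that of $r$. Shrinking $1/q$ below the root separation $\delta$ ensures each such interval contains at most one root, but it does not select \emph{which} one. (There is also a minor mismatch: for $\deg f$ even, $f(-B)$ and $f(B)$ have the same sign for large $B$, so the hypotheses of Lemma~2.2 cannot be met with $a = -Bq$, $b = Bq$; you would already need endpoints local to $r$.)

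To fix this you would need to restrict $[a/q,b/q]$ so that $r$ is the only root inside, i.e.\ find $q_1, q_2 \in F(\mathcal M)$ with $q_1 < r < q_2$ separating $r$ from every other root of $f$. But this is exactly what can fail: distinct conjugate roots in $R(\mathcal M)$ need not be separable by an element of $F(\mathcal M)$. (Over $\mathbb{Q}(t)$ with $t$ infinite, the roots $\sqrt{t + \sqrt t}$ and $\sqrt{t - \sqrt t}$ of $(Y^2-t)^2 - t$ differ by roughly $1$, yet no rational function lies between them, since no element of $\mathbb{Q}(t)$ is within bounded distance of $\sqrt t$.) Your separation bound $\delta \geqslant 1/C$ lives in $R(\mathcal M)$ and gives no access to an $F(\mathcal M)$-element between the roots, because computing an integer part of, say, $2Cr$ is precisely what we are trying to prove. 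The paper avoids this by introducing the relation $x \sim y$ iff no $F(\mathcal M)$-element lies between them, observing that equivalent roots share an integer part, and repeatedly differentiating (using Rolle's theorem, and $f'(r)=0$ at multiple roots) until the derivative has a single simple root in the $\sim$-class of $r$; only then can isolating $q_1, q_2 \in F(\mathcal M)$ be found, and Lemma~2.2 applied safely. Without that reduction, your argument does not go through.

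Your Case 1 (computing integer parts of $F(\mathcal M)$-elements via Lemma~2.1 on a linear polynomial) is fine and is essentially the same content as the paper's direct use of the induction axiom for $mq \leqslant p$.
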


\begin{proof}
    First we prove that $\widetilde{\mathcal M} \subseteq^{IP} F(\mathcal M)$. Consider $\frac{p}{q} \in F(\mathcal M), p > 0, q > 0$ (it is sufficient to prove the existence of the integer parts only for positive elements of $F(\mathcal M)$). $\mathcal M \vDash 0\cdot q \leqslant p \wedge (p + 1)q > p$. Then, by induction, we obtain $\mathcal M \vDash \exists m (m q \leqslant p \wedge (m + 1)q > p)$.

    Consider a positive $r \in R(\mathcal M) \setminus F(\mathcal M)$. Let $f \in \widetilde{\mathcal{M}}[X]$ be the minimal polynomial of $r$. Let introduce the following equivalence relation $\sim$ on $R(\mathcal M)$: $x \sim y \leftrightharpoons \nexists z \in F(\mathcal M): (x < z < y \vee y < z < x)$. Note that if $x \sim y$ and $q_1 < x < q_2$ for some $q_1, q_2 \in F(\mathcal M)$, then $q_1 < y < q_2$. It is not very hard to prove that elements of $F(\mathcal M)$ can be equivalent only to themselves. If $f$ has some root $r' \sim r, r' \ne r$, then $f'$ has a root $r''$ between $r'$ and $r$ by Rolle's theorem (and $r'' \sim r$). If $r$ is a multiple root of $f$, then $f'(r) = 0$. So, we can take a derivative of $f$ until $f^{(k)}$ has only one simple root $\tilde r \sim r$. Then we can find $q_1, q_2 \in F(\mathcal M)$, $q_1, q_2 > 0$ such that the only root of $f^{(k)}$ between $q_1$ and $q_2$ is $\tilde r$ (since $r$ is positive, so is $\tilde r$, hence $q_1, q_2$ can be chosen positive). Let $q_i = \frac{a_i}{q}$, $q, a_i \in M$. Since $\tilde r$ is simple, $f^{(k)}(q_1) f^{(k)}(q_2) < 0$. Also we can suppose that $f^{(k)}$ has no roots in $F(\mathcal M)$ (if not, we can divide $f^{(k)}$ by $(X - q)$ for the suitable $q \in F(\mathcal M)$ and then multiply by the suitable $m \in M$). So, we can apply Lemma 2.2 and obtain that there exists $b \in M$ such that $f^{(k)}(\frac{b}{q}) f^{(k)}(\frac{b + 1}{q}) \leqslant 0$, $a_1 \leqslant b < a_2$. This implies that there is a root between $\frac{b}{q}$ and $\frac{b + 1}{q}$. Since there is only one root $\tilde r$ on the segment $[q_1, q_2]$, $\frac{b}{q} \leqslant \tilde r \leqslant \frac{b + 1}{q}$. Given that $\widetilde{\mathcal M} \subseteq^{IP} F(\mathcal M)$, we obtain that $\tilde r$ (and hence $r$) has an integer part in $\mathcal M$.
\end{proof}

\begin{corollary}
    $\mathsf{I(lit)} \vdash \mathsf{IOpen}$.
\end{corollary}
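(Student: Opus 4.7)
The plan is to combine Theorem 2.1 with Shepherdson's theorem (stated at the end of the Preliminaries), reducing the corollary to the ring-theoretic criterion for modelhood of $\mathsf{IOpen}$. By the completeness theorem, it suffices to show $\mathcal M \vDash \mathsf{IOpen}$ for every model $\mathcal M \vDash \mathsf{I(lit)}$.

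First I would fix $\mathcal M \vDash \mathsf{I(lit)}$ and invoke Proposition 2.1 (together with the remark after Proposition 1.1) to note that $\mathcal M$ is a discretely ordered commutative semiring with identity: commutativity, associativity and distributivity come from (1)--(5), linearity and antisymmetry of $\leqslant$ from (7)--(8), and discreteness follows from $\mathsf Q$ together with these laws. Next I would verify that the passage to $\widetilde{\mathcal M}$ described just before Lemma 2.1 identifies $\mathcal M$ with the non-negative part $\widetilde{\mathcal M}^+$ via the map $m \mapsto [(m,0)]$, and that this identification is an isomorphism of ordered semirings. Once this is in place, Theorem 2.1 gives $\widetilde{\mathcal M} \subseteq^{IP} R(\mathcal M)$, where $R(\mathcal M)$ is by construction the real closure of the ordered fraction field of $\widetilde{\mathcal M}$.

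At that point Shepherdson's theorem applies directly to the discretely ordered ring $\widetilde{\mathcal M}$: since $\widetilde{\mathcal M} \subseteq^{IP} R(\widetilde{\mathcal M}) = R(\mathcal M)$, we obtain $\widetilde{\mathcal M}^+ \vDash \mathsf{IOpen}$. Transferring along the isomorphism $\mathcal M \cong \widetilde{\mathcal M}^+$ yields $\mathcal M \vDash \mathsf{IOpen}$, and since $\mathcal M$ was arbitrary, $\mathsf{I(lit)} \vdash \mathsf{IOpen}$ by completeness.

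There is essentially no hard step left after Theorem 2.1; the only point requiring a line of care is the identification of $\mathcal M$ with $\widetilde{\mathcal M}^+$, which is routine because $\mathsf{I(lit)}$ proves the cancellation law (6), so the natural map into equivalence classes is injective, and every non-negative class $[(m,n)]$ with $n \leqslant m$ admits a representative of the form $(k,0)$ by $\mathsf Q$-axiom (Q8) applied inside $\mathcal M$. With that observation the corollary is immediate.
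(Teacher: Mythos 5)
Your proof is correct and follows exactly the route of the paper: invoke Theorem 2.1 to get $\widetilde{\mathcal M}\subseteq^{IP} R(\mathcal M)$, then apply Shepherdson's criterion (Theorem 1.1) to conclude $\mathcal M\cong\widetilde{\mathcal M}^+\vDash\mathsf{IOpen}$. The paper compresses this to one line; you have merely made explicit the identification of $\mathcal M$ with the non-negative part of its Grothendieck ring, which is the only bookkeeping step involved.
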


\begin{proof}
    Apply Theorem 1.1 to Theorem 2.1.
\end{proof}

\begin{theorem}
    $\mathsf{I(lit)}$ is not finitely axiomatizable.
\end{theorem}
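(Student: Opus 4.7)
The approach is proof by contradiction combined with the integer-part characterisation from Corollary 2.1 and Theorem 2.1. Suppose $\mathsf{I(lit)}$ were finitely axiomatisable over $\mathsf Q$. Since any such axiomatisation can be replaced by $\mathsf Q$ together with finitely many literal-induction instances, there would be a finite set of literals $\Phi = \{\phi_1, \ldots, \phi_k\}$ with $\mathsf{I(lit)} \equiv \mathsf Q + \{I_{\phi_i} : 1 \leqslant i \leqslant k\}$, where $I_\phi$ denotes the induction instance for $\phi$. Let $d$ be a common upper bound on the degrees of the terms occurring in the $\phi_i$.

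Inspecting Lemmas 2.1--2.2 and the proof of Theorem 2.1 reveals that the induction axioms actually invoked to extract an integer part of a root of some $f \in \widetilde{\mathcal M}[X]$ are induction axioms on literals whose degree is controlled by $\deg f$ alone. This motivates the following reduction: it suffices, for each $d$, to construct a discretely ordered ring $\mathcal R_d$ embedded in a real closed field $K_d = R(\mathcal R_d)$ which is a ``$d$-fold integer part'' of $K_d$ in the sense that every root in $K_d$ of every polynomial over $\mathcal R_d$ of degree at most $d$ has an integer part in $\mathcal R_d$, but which is not a genuine integer part: some positive $r \in K_d$, algebraic of degree $d+1$ over the fraction field $F(\mathcal R_d)$, satisfies $\lfloor r \rfloor \notin \mathcal R_d$. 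By the former property $\mathcal R_d^+ \models \mathsf Q + \Phi$; by the latter, the induction instance for the literal $f(x) \leqslant 0$ (where $f$ is a suitably scaled polynomial of degree $d+1$ over $\mathcal R_d$ having $r$ as a root) fails in $\mathcal R_d^+$, since no element of $\mathcal R_d$ can witness the required sign change. Thus $\mathcal R_d^+ \models \mathsf Q + \Phi$ but $\mathcal R_d^+ \not\models \mathsf{I(lit)}$, contradicting the assumed axiomatisation.

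To construct $\mathcal R_d$, embed $\mathbb Z[t]$ (with $t$ a positive infinite transcendental) in a sufficiently rich real closed field $K_d$, fix an algebraic $r \in K_d$ of degree $d+1$ over $\mathbb Q(t)$ -- a natural candidate being $r = \sqrt[d+1]{p(t)}$ for a suitably generic irreducible $p(t) \in \mathbb Z[t]$ -- and form $\mathcal R_d$ as the least discretely ordered subring of $K_d$ containing $\mathbb Z[t]$ and closed under adjunction of integer parts (taken in $K_d$) of roots of polynomials of degree at most $d$ over the current ring. The technical heart of the proof is verifying that this $\omega$-length iterative closure never produces $\lfloor r \rfloor$; this requires a careful algebraic-dependence analysis of how the degrees and minimal polynomials of adjoined elements evolve under the degree-$\leqslant d$ operations, together with a genericity argument showing that $r$ lies outside the reachable set while retaining degree $d+1$ over the fraction field of the final closure. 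The remaining verifications (discrete ordering, the $d$-fold integer-part property, and the resulting failure of literal-induction implied by $\lfloor r \rfloor \notin \mathcal R_d$) follow the pattern of the proof of Theorem 2.1 and are routine once the non-reachability claim is established.
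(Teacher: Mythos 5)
Your overall strategy is the same as the paper's: for each degree bound $d$, build a discretely ordered semiring that validates $\mathsf Q$ plus all literal-induction instances of degree $\leqslant d$ but fails $\mathsf{IOpen}$ (by the Shepherdson characterization, fails to be an integer part of the real closure of its fraction field). The disagreement is entirely in how that countermodel is produced, and your proposal leaves the hard part unproved.

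You construct $\mathcal R_d$ as an iterative closure of $\mathbb Z[t]$ under adjunction of integer parts of roots of degree-$\leqslant d$ polynomials, and your argument reduces to the claim that some fixed $r$ of degree $d+1$ never gets its integer part adjoined along the way. You explicitly flag this ``non-reachability'' claim as the technical heart and defer it to an unspecified ``genericity argument.'' That is precisely the content of the theorem, not a routine verification: a priori nothing prevents the closure process from accidentally generating $\lfloor r\rfloor$ through a cascade of low-degree adjunctions, and controlling an $\omega$-length closure under ring operations and root extraction is a genuinely delicate algebraic bookkeeping task. Without it your argument is a reduction, not a proof. There is also a circularity to watch: you want $r$ to have degree $d+1$ over $F(\mathcal R_d)$, but $\mathcal R_d$ depends on the closure, so the degree could drop as elements are adjoined -- you note this but do not resolve it.

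The paper circumvents all of this by writing down the countermodel in closed form rather than as a closure. Given the degree bound $N$, it takes the semiring of generalized polynomials $\sum a_k X^{k/q}$ where the denominators $q$ are restricted to products of primes $\leqslant N$ (with $a_0 \in \mathbb Z$, higher coefficients real algebraic). Failure of the integer-part property is immediate: $X^{1/p}$ for a prime $p > N$ lies in the real closure but its truncation does not lie in the ring. The ``$d$-fold integer part'' property you want is then an explicit computation (the paper's Lemma 2.3): the leading exponent of any root of a degree-$\leqslant N$ polynomial over the ring has the form $\frac{k_i - k_j}{C(j-i)}$ with $j - i \leqslant N$, so its denominator introduces no new prime factors $> N$, and hence the leading term (and by Lemma 2.4 the whole integer part) stays inside the ring. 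This replaces your unbounded closure-and-genericity analysis with a one-line number-theoretic observation about denominators. If you want to salvage your iterative construction, you would in effect have to rediscover this invariant (some bound on which primes can appear in the exponents of adjoined elements) -- at which point the explicit description is simply cleaner.
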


\begin{proof}
    Suppose that $\mathsf{I(lit)}$ is finitely axiomatizable, then $\mathsf{I(lit)} \equiv \mathsf Q + \Gamma$, where $\Gamma$ is finite set of instances of induction axiom schema for literals. Denote by $N$ the largest degree of polynomials from $\Gamma$ (all terms in $\mathsf{I(lit)}$ are equal to polynomials) and denote by $p_1, \dots, p_n$ all the prime numbers $\leqslant N$. 
    
    Consider the following structure $\mathcal{M}$: $M = \{a_m X^{\frac{m}{q}} + a_{m - 1} X^{\frac{m - 1}{q}} + \dots + a_1 X^{\frac{1}{q}} + a_0| m, q \in \mathbb N, q = p_1^{\alpha_1}\dots p_n^{\alpha_n}$ for some $\alpha_1, \dots, \alpha_n \in \mathbb N, a_m, \dots, a_1 \in \mathbb R_{alg}, a_0 \in \mathbb Z, a_m \geqslant 0\}$ with the operations defined in the usual way. Note that the corresponding ring $\widetilde{\mathcal M}$ is not contained as an integer part of the real closure of the fraction field of this ring. We denote this real closure by $\mathcal R$ (in our case $\mathcal R = \{a_m X^{\frac{m}{q}} + a_{m - 1} X^{\frac{m - 1}{q}} + \dots + a_1 X^{\frac{1}{q}} + a_0 + a_{-1}X^{-\frac{1}{q}} + \dots | a_i \in \mathbb R_{alg}\}$, because of well known fact that the real closure of $\mathbb Z[X]$ is $\{a_m X^{\frac{m}{q}} + a_{m - 1} X^{\frac{m - 1}{q}} + \dots + a_1 X^{\frac{1}{q}} + a_0 + a_{-1}X^{-\frac{1}{q}} + \dots | a_i \in \mathbb R_{alg}\}$ and the latter contains $\mathcal M$). So, it is sufficient to prove $\mathcal M \vDash \mathsf Q + \Gamma$ and to apply Theorem 1.1. 
    
    \begin{lemma}
        Let $f \in \widetilde{\mathcal M}[t]\setminus\{0\}$, $\deg f \leqslant N$ and $r = a_m X^{\frac{m}{q}} + a_{m - 1} X^{\frac{m - 1}{q}} + \dots + a_1 X^{\frac{1}{q}} + a_0 + a_{-1} X^{-\frac{1}{q}} + \dots \in R$ be a root of $f$, $m > 0$. Then $\frac{m}{q} = \frac{m'}{q'}$, where $q' = p_1^{\alpha_1} \dots p_n^{\alpha_n}$ (i.e. $a_m X^{\frac{m}{q}} \in \widetilde{M}$). 
    \end{lemma}
    
    \begin{proof}[Proof of Lemma 2.3.]
        Let $f(t) = P_k t^k + \dots + P_0$, where $P_i \in \widetilde{M}$, so $P_k r^k + \dots + P_0 = 0$. All nonzero $P_i r^i$ are of the form 
        $$b_i X^{\frac{i\cdot m}{q} + \frac{k_i}{C}} + \dots,$$ 
        where $C = p_1^{\beta_1}\dots p_n^{\beta_n}$ is a common denominator of degrees in all $P_i$ (i.e. $f \in \mathbb R_{alg} [X^{\frac{1}{C}}][t]$), $b_i \in \mathbb R_{alg}\setminus\{0\}$.
        
        Consider the largest $\frac{i\cdot m}{q} + \frac{k_i}{C}$. Since $f(r) = 0$, there is $j \neq i$ such that $$\frac{j\cdot m}{q} + \frac{k_j}{C} = \frac{i\cdot m}{q} + \frac{k_i}{C}.$$
        So, $\frac{m}{q} = \frac{k_i - k_j}{C(j - i)}$. Let assume $j > i$, then, $m' := k_i - k_j$ and $q' := C(j - i)$. Since $j - i \leqslant N$, $q'$ is of the required form.
    \end{proof}
    
    \begin{lemma}
        Let $f \in \widetilde{\mathcal M}[t]$, $\deg f \leqslant N$ and $r = \sum\limits_{k = m}^{-\infty}a_k X^{\frac{k}{q}}\in R$, $f(r) = 0$. Then $r$ has an integer part in $\widetilde{\mathcal M}$.
    \end{lemma}
    
    \begin{proof}[Proof of Lemma 2.4.]
        Induction by $\max(m, 0)$. If $m \leqslant 0$, then $r \in (a_0 + 1, a_0 - 1)$ and $r$ has an integer part. If $m > 0$, $a_m X^{\frac{m}{q}} \in \widetilde{M}$ by Lemma 2.3. So, we can apply induction hypothesis to $f(t + a_m X^{\frac{m}{q}})$ and $r - a_m X^{\frac{m}{q}} = \sum\limits_{k = m - 1}^{-\infty}a_k X^{\frac{k}{q}}$. Denote by $s$ the integer part of $r - a_m X^{\frac{m}{q}}$, then $a_m X^{\frac{k}{m}} + s$ will be the integer part of $r$.
    \end{proof}
    
    \textit{Proof of Theorem 2.2.}
    Let $\varphi(x, \vec{y})$ be an atomic formula or the negation thereof such that $Ind_{\varphi} \in \Gamma$. Then, $\varphi$ is equivalent to one of the following: $f(x) = 0$, $f(x) \neq 0$, $f(x) \leqslant 0$, $f(x) < 0$, where $f \in \widetilde{\mathcal M}[t]$ (with the coefficients dependent on $\vec{y}$) and $\deg f \leqslant N$. Cases $f(x) = 0$ and $f(x) \neq 0$ are trivial (since if polynomial has an infinite number of roots, then it is a zero polynomial). Consider the case $f(x) \leqslant 0$, the case $f(x) < 0$ is very similar. Suppose, $\mathcal M \vDash (f(0) \leqslant 0) \wedge \exists c (f(c) > 0)$. Let $A = \{r \in R | f(r) > 0 \wedge r > 0\}$. Since $\mathcal R$ is real closed, $A$ is a finite union of disjoint intervals. Since $\mathcal M \vDash \exists c (f(c) > 0)$, $M \cap A \ne \varnothing$. Consider the leftmost interval $(a, b)$ of $A$ containing some element $c$ of $M$. Since $f(a) = 0$, we have $[a] \in M$, where $[a]$ is the integer part of $a$ (by Lemma 2.4). Since $[a] \leqslant a < [a] + 1$ and $\mathcal M$ is discretely ordered, $[a] + 1 \leqslant c$ and $[a] + 1 \in (a, b)$. So, $\mathcal M \vDash f([a]) \leqslant 0 \wedge f([a] + 1) > 0$.
\end{proof}

\section{Relations between $\mathsf{I(=)}$, $\mathsf{I(\ne)}$ and $\mathsf{I(\leqslant)}$}

Our aim in this section is to prove the following theorems:

\begin{theorem}
    There are the following relations between considered fragments:
    \begin{itemize}
        \item $\mathsf I(=) \nvdash \mathsf I(\ne), I\mathsf (\leqslant), \mathsf I(\nleqslant)$,
        \item $\mathsf I(\ne) \nvdash \mathsf I(\leqslant), \mathsf I(\nleqslant)$,
        \item $\mathsf I(\leqslant) \nvdash \mathsf I(=), \mathsf I(\ne), \mathsf I(\nleqslant)$,
    \end{itemize}
\end{theorem}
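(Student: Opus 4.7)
The plan is to prove each of the eight non-derivabilities in Theorem 3.1 by exhibiting explicit countermodels. For each pair $(T_1, T_2)$ with $T_1 \nvdash T_2$, I will construct a structure $\mathcal M$ interpreting $\mathcal L_{ar}$ such that $\mathcal M \vDash \mathsf Q + T_1$ while some parameter-instance of the induction schema defining $T_2$ fails in $\mathcal M$. The general template is to start from $\mathbb N$ and adjoin a few carefully chosen nonstandard elements --- typically a single ``infinite'' element $c$, a $\mathbb Z$-chain $\{c+n : n \in \mathbb Z\}$ sitting above $\mathbb N$, or a pair of elements that are algebraically distinguishable but order-indistinguishable --- and then define $+$, $\cdot$, $S$, $\leqslant$ on the enlarged universe so that (a) every $\mathsf Q$-axiom (in particular $Q8$, which pins $\leqslant$ to $+$) still holds, (b) every instance of the induction schema of $T_1$ is valid in $\mathcal M$, and (c) a concrete formula of $T_2$, with suitable parameters from $\mathcal M$, refutes an induction axiom.

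Clause (b) is the heart of each verification. Because the schema contains infinitely many instances, a uniform argument is needed rather than example-by-example checking. The strategy is to classify the formulas of the relevant syntactic class (polynomial equality, non-equality, $\leqslant$, or $\nleqslant$) according to their shape on the nonstandard part of $\mathcal M$, and to show that in each case either the induction premise $\varphi(0) \wedge \forall x\,(\varphi(x) \to \varphi(Sx))$ already forces $\varphi$ to hold at every nonstandard element, or else the premise already fails on the standard part, making the instance vacuous. Concretely this reduces to a finite case analysis of the ``zero-set,'' ``non-zero set,'' or ``sign'' of polynomials with coefficients in $\mathcal M$ under evaluation at the added elements --- which stays tractable precisely because only a small number of nonstandard elements are adjoined.

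The main obstacle I expect is the tension between clauses (b) and (c): the built-in failure required for $T_2$ must not accidentally trigger a failure for $T_1$. For instance, to separate $\mathsf I(\leqslant)$ from $\mathsf I(=)$ one needs two terms that agree on $\mathbb N$ but disagree at some nonstandard point, while every $\leqslant$-instance of induction still goes through; this imposes rather delicate constraints on how the order on $\mathcal M$ interacts with the new algebraic elements. A secondary aim, in order to keep the proof compact, is to arrange for a single model to witness several of the bullet points simultaneously: since theories such as $\mathsf I(\leqslant)$ and $\mathsf I(\nleqslant)$ share closely related syntactic shape, a single carefully chosen $\mathcal M$ will typically refute multiple induction instances at once. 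Once three or four distinct model-types are described and verified, the eight separations in the theorem follow by combining them.
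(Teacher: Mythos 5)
Your plan---build a small number of countermodels by extending $\mathbb N$ with, variously, a single infinite element, a polynomial-type semiring with $\mathbb Z$-chains above $\mathbb N$ (the paper's $\mathbb Z[X]^+$), and a pair of elements that are algebraically distinguishable but $\leqslant$-indistinguishable, then verify $T_1$-induction uniformly via a case analysis on how terms behave on the nonstandard part while exhibiting a concrete failing $T_2$-instance---is precisely how the paper proves Theorem 3.1 in Propositions 3.1--3.4, using three models to cover all eight separations. The approach and model types match; what remains unexecuted in the proposal is the concrete work (the Vandermonde-matrix argument showing $\mathbb Z[X]^+ \vDash \mathsf I(=), \mathsf I(\ne)$, the explicit non-commutative $+$ and $\cdot$ on the two-$\omega$ model, and the term-classification lemmas), all of which the paper carries out.
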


\begin{theorem}
    \begin{enumerate}
        \item[(i)] $D(\mathsf{I}(=))$ is decidable;
        \item[(ii)] $\mathsf I(=) \vdash Th_=(\mathbb N)$;
        \item[(iii)] $\mathsf Q + Th_=(\mathbb N) \nvdash \mathsf I(=)$.
    \end{enumerate}
\end{theorem}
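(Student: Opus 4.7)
I plan to address (ii) first, then (iii), then (i); the last is the main obstacle.

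For (ii), the key is the Remark after Proposition 1.1: $\mathsf I(=)$ already proves identities (1)--(5), i.e.\ commutativity and associativity of $+$ and $\cdot$ together with distributivity. Together with $x+0=x$, $x\cdot 0 = 0$, and $x\cdot S0 = x$ (the last derivable from Q6, Q7), these suffice to rewrite any $\mathcal L_{ar}$-term $t(\vec x)$ into a canonical polynomial normal form $\sum_i c_i \prod_j x_j^{e_{ij}}$ with $c_i \in \mathbb N$. Since $\mathbb N$ is the initial commutative semiring with $0$ and $1$, two such normal forms agree as functions on $\mathbb N$ iff they are formally identical. Hence for every $\forall \vec x\,(t=s)\in Th_=(\mathbb N)$, the normalizations of $t$ and $s$ (each provable inside $\mathsf I(=)$) coincide, yielding the required proof of $t = s$.

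For (iii), I construct a commutative semiring $\mathcal N$ that models $\mathsf Q + Th_=(\mathbb N)$ but refutes an instance of equation induction. Take $\mathcal N$ to be the free commutative semiring on one generator $\alpha$ modulo the relation $\alpha + 1 = \alpha$. Computing normal forms yields $N = \{0\}\cup\{k\alpha^d : k\geq 1,\,d\geq 0\}$, with addition dominated by the higher-degree term (and coefficients summed at equal degree) and multiplication $k\alpha^d\cdot l\alpha^e = kl\alpha^{d+e}$. Being a commutative semiring, $\mathcal N$ automatically models $Th_=(\mathbb N)$; Q1 and Q2 are routine, and Q3 holds because every $k\alpha^d$ with $d\geq 1$ satisfies $k\alpha^d + 1 = k\alpha^d$ and so is its own predecessor. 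The failing induction is $\varphi(x,y)\equiv(y+x=y)$ with $y=\alpha$: the base $\alpha+0=\alpha$ holds, and the step goes through since $\alpha+x=\alpha$ implies $\alpha+Sx = \alpha+1 = \alpha$; but the conclusion fails at $x=\alpha$ because $\alpha+\alpha = 2\alpha \neq \alpha$ in $\mathcal N$.

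For (i), my plan is to prove $D(\mathsf I(=)) = D(\mathsf Q)$ and then invoke the decidability of $D(\mathsf Q)$ from \cite{jerabek2016}. The inclusion $D(\mathsf I(=)) \subseteq D(\mathsf Q)$ is immediate. For the converse, the key observation is that any integral-domain commutative semiring modelling $\mathsf Q$ automatically models $\mathsf I(=)$: if the base and step of an equation induction hold for $t(x,\vec b) = s(x,\vec b)$, then $t(x,\vec b)-s(x,\vec b)\in\widetilde{\mathcal M}[x]$ vanishes on all of $\mathbb N$ and hence, in characteristic $0$, is identically zero. Thus it suffices to realize every $\mathsf Q$-consistent Diophantine equation in a suitable integral-domain model, for example the positive cone of $\mathbb Z[\vec y]$. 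The hard part is that (iii) shows some $\mathsf Q$-models are not integral domains, so one must argue that their Diophantine content can always be transferred to an integral-domain witness; combined with Jeřábek's explicit characterization of $D(\mathsf Q)$, this transfer will be the main technical step.
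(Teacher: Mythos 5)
Parts (ii) and (iii) of your proposal are correct. Your (ii) matches the paper's argument: normalize both sides to polynomial form using the commutativity, associativity and distributivity laws provable in $\mathsf I(=)$, and note that two such normal forms agree on all of $\mathbb N$ only if they are formally identical. Your (iii) uses a different countermodel from the paper's (the paper takes $\mathbb N \cup \{\omega_0, \omega_1\}$ with $\omega_i + \omega_j = \omega_i \cdot \omega_j = \omega_{\max(i,j)}$ and refutes the induction instance for $x + \omega_0 = \omega_0$ at $x = \omega_1$; you take the free commutative semiring on one generator $\alpha$ modulo $\alpha + 1 = \alpha$ and refute the instance for $\alpha + x = \alpha$ at $x = \alpha$). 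Both models work, and yours has the advantage of being the generic commutative-semiring countermodel, which makes it transparent why the equational theory of commutative semirings (i.e.\ $Th_=(\mathbb N)$) cannot recover $\mathsf I(=)$.

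Your plan for (i) has a gap that cannot be repaired as stated. You propose to establish $D(\mathsf I(=)) = D(\mathsf Q)$ by transferring any $\mathsf Q$-model witness of a solvable equation to an integral-domain $\mathsf Q$-model, which would then automatically satisfy $\mathsf I(=)$. But the equation $Sx = x$ (i.e.\ $x + 1 = x$) is solvable in the model $\mathbb N \cup \{\omega\}$ from Proposition~3.1 --- which models $\mathsf I(=)$, and a fortiori $\mathsf Q$ --- yet it has no solution in any integral domain, since cancellation yields $1 = 0$. So the intended integral-domain witness need not exist, and the transfer cannot succeed even for equations already known to lie in $D(\mathsf I(=))$. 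Moreover, the equality $D(\mathsf I(=)) = D(\mathsf Q)$ is not established by the paper and is not obvious, so reducing the problem to the cited decidability of $D(\mathsf Q)$ requires an argument you do not give. The paper proceeds differently: it shows that the single model $\mathcal M = \mathbb N \cup \{\omega\}$ is \emph{universal} for $\mathsf I(=)$-solvability of Diophantine equations, and then decides solvability in $\mathcal M$ by a case split on the (normalized) polynomial degrees of the two sides --- both $0$ is a constant check, one $0$ and one positive reduces to a bounded search over $\mathbb N$ (any nonstandard coordinates of a solution can be replaced by $0$), and both positive is always solvable by setting every variable to $\omega$.
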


\begin{theorem}
    $\mathsf{I}(\ne) + \forall x \forall y (x + y = y + x) \vdash \mathsf{I}(=)$.
\end{theorem}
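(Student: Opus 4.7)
I would prove each instance $t(0) = s(0) \wedge \forall x\,(t(x) = s(x) \to t(Sx) = s(Sx)) \to \forall x\,(t(x) = s(x))$ of $\mathsf{I}(=)$ inside $\mathsf{I}(\ne) + \forall xy\,(x+y=y+x)$ by contradiction. Assume the two premises together with a witness $y$ for $t(y) \ne s(y)$, and aim for a contradiction. As a preliminary, additive cancellation $x + z = y + z \to x = y$ is derivable in $\mathsf{I}(\ne)$ by applying induction to $\varphi(z) := x + z \ne y + z$ (base case $x \ne y$; step from Q5 and Q2); commutativity then yields $0 + x = x$, left cancellation $z + x = z + y \to x = y$, and the ``left-Q5'' identity $Sx + y = S(x + y)$. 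Also useful is the contrapositive of the induction step, $\forall x\,(t(Sx) \ne s(Sx) \to t(x) \ne s(x))$.

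The core step is to apply the contrapositive of $\mathsf{I}(\ne)$ to the $\ne$-formula
\[
\chi(z) \;:=\; t(z) + s(y) \ne s(z) + t(y),
\]
with $y$ treated as a parameter. Using $t(0) = s(0)$ and cancellation, $\chi(0)$ reduces to $t(y) \ne s(y)$ and hence holds; while $\chi(y)$ is exactly $t(y) + s(y) \ne s(y) + t(y)$ and fails by commutativity of addition. Hence there is some $z$ satisfying both
\[
t(z) + s(y) \ne s(z) + t(y) \quad \text{and} \quad t(Sz) + s(y) = s(Sz) + t(y).
\]

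The remaining, and hardest, task is to derive a contradiction from these two relations. When $t(Sz)$ and $s(Sz)$ syntactically reduce (using Q5 and commutativity) to $S\,t(z)$ and $S\,s(z)$---as happens for the associativity-type identities of Proposition 1.1---the equality becomes $S(t(z) + s(y)) = S(s(z) + t(y))$ via the derived identity $Sa + b = S(a + b)$, and Q2 then recovers $t(z) + s(y) = s(z) + t(y)$, directly contradicting the inequality. For an arbitrary arithmetic term this syntactic reduction is not automatic; one must also call on the induction step hypothesis to control the relationship between $t(Sz), s(Sz)$ and $t(z), s(z)$. Carrying out this descent uniformly---most plausibly by iterating the above application of $\mathsf{I}(\ne)$ with the parameter $y$ replaced by a suitably chosen descendant such as the $z$ just produced, and tracking an appropriate complexity measure---is the technical heart of the proof, and is the place where both commutativity and the induction step hypothesis are essential.
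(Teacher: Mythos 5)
Your initial move is sound but you stop exactly where the real difficulty begins, and you acknowledge as much, so this is a genuine gap rather than a finished proof. The single application of $\mathsf{I}(\ne)$ to $\chi(z) := t(z) + s(y) \ne s(z) + t(y)$ gives you a $z$ with $\chi(z)$ and $\neg\chi(Sz)$, but you have no mechanism to turn that pair of facts into a contradiction for arbitrary terms, and the idea of ``iterating with a complexity measure'' is precisely the unsolved part. In particular you never establish associativity of $+$, commutativity and associativity of $\cdot$, or distributivity, which are all needed to put terms into any normal form amenable to a descent.

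The paper closes this gap in two stages that you are missing. First, a separate lemma (Lemma 3.2) derives all semiring identities from $\mathsf{I}(\ne)$ plus commutativity of $+$, each by a bespoke application of $\mathsf{I}(\ne)$ to a formula of the shape $\varphi(x,y,z,t)$ designed so that $\varphi(x,y,z,0)$ holds, $\neg\varphi(x,y,z,St)\to\neg\varphi(x,y,z,t)$ holds, and $\varphi(x,y,z,z)$ contradicts commutativity; for instance associativity of $+$ uses $\varphi := (x+(y+z)) + ((x+y)+t) \ne ((x+y)+z) + (x+(y+t))$. This is an actual theorem, not a given, and without it there is no polynomial normal form. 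Second, once the model is a discretely ordered semiring, it extends to a ring $\widetilde{\mathcal M}$ and any $=$-atomic formula becomes $f(x)=0$ for $f\in\widetilde{\mathcal M}[t]$. The paper then proves the $\mathsf{I}(=)$ instance for $f$ by induction on $\deg f$, using the forward-difference operator $\tilde g(t) := g(St) - g(t)$ and showing by a descending induction that $\tilde f^{(n-k)}(x) = 0$ for all $x$ and all $k < n$ (using the outer induction hypothesis on lower-degree polynomials and the fact that $\tilde f^{(n)}$ is constant and vanishes on $\mathbb N$). In particular $f(Sx) = f(x)$ for all $x$; then the hypothesis $f(x_0)\ne 0$ contradicts $\mathsf{I}(\ne)$ applied to $f(x) - f(x_0) \ne 0$. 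The degree of the difference polynomial is the ``complexity measure'' your sketch is missing, and the reduction to polynomial form via the ring-identity lemma is what makes it available.
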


\begin{proposition}
    (i) $\mathsf{I(=)} \nvdash Sx \ne x$ and $\mathsf{I(=)} \nvdash x + z = x + y \rightarrow z = y$;
    
    (ii) $\mathsf{I(=)} \nvdash \mathsf{I(\ne)}$.
\end{proposition}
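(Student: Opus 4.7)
The plan is to construct a single model $\mathcal M$ of $\mathsf{I(=)}$ that witnesses all three failures at once. I would take $M = \mathbb N \cup \{\infty\}$ and extend the standard operations by $S\infty = \infty$, $\infty + x = x + \infty = \infty$, $\infty \cdot 0 = 0 \cdot \infty = 0$, $\infty \cdot x = x \cdot \infty = \infty$ for $x \ne 0$, together with $n \leqslant \infty$ for all $n \in M$. First I would verify directly that $\mathcal M \vDash \mathsf{Q}$ (Q3 is witnessed by $\infty = S\infty$; Q8 is witnessed by taking $r = \infty$ whenever $y = \infty$) and observe that commutativity, associativity, and distributivity of $+, \cdot$ all survive the extension, using the convention $0 \cdot \infty = 0$.

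The heart of the proof will be verifying $\mathcal M \vDash \mathsf{I(=)}$. Fix parameters $\vec a \in M$; by distributivity every term $t(x, \vec a)$ rewrites as a polynomial $\sum_i c_i x^i$ with coefficients $c_i \in M$. The function $n \mapsto t(n, \vec a)$ on $\mathbb N$ falls into exactly one of four shapes: (a) a constant $c \in \mathbb N$ (all $c_i$ finite, $c_i = 0$ for $i \geqslant 1$); (b) a non-constant polynomial over $\mathbb N$, hence unbounded (all $c_i$ finite, some $c_i \ne 0$ with $i \geqslant 1$); (c) identically $\infty$ (when $c_0 = \infty$); or (d) finite at $0$ and equal to $\infty$ for $n \geqslant 1$ (when $c_0 \in \mathbb N$ and some $c_i = \infty$ with $i \geqslant 1$). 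In shape (a), $t(\infty) = c_0$; in each of (b), (c), (d), $t(\infty) = \infty$. Now, assuming the hypotheses of induction for $t = s$, ordinary induction on $\mathbb N$ forces $t(n, \vec a) = s(n, \vec a)$ for every $n \in \mathbb N$, which in turn forces $s$ into the same shape as $t$; in every shape the value at $\infty$ is determined (by the shape, and by $c_0 = s(0)$ in case (a)), so $t(\infty, \vec a) = s(\infty, \vec a)$.

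Once $\mathcal M \vDash \mathsf{I(=)}$ is established, the witnesses are immediate: $S\infty = \infty$ refutes $Sx \ne x$, and $\infty + 1 = \infty + 0$ with $1 \ne 0$ refutes $x + z = x + y \to z = y$, giving (i). For (ii) I would note that $\mathsf{I(\ne)} \vdash Sx \ne x$ by induction on this literal: the base is Q1, and the step $Sx \ne x \to SSx \ne Sx$ is the contrapositive of Q2. Since $\mathcal M \not\vDash Sx \ne x$, we obtain $\mathcal M \not\vDash \mathsf{I(\ne)}$ while $\mathcal M \vDash \mathsf{I(=)}$, hence $\mathsf{I(=)} \nvdash \mathsf{I(\ne)}$. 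The main obstacle is the second paragraph: one must ensure that no polynomial identity over $\mathcal M$ can hold on all of $\mathbb N$ without also holding at $\infty$, and the four-shape classification — together with the elementary fact that a bounded polynomial over $\mathbb N$ is constant — is what makes this work.
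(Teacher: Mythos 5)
Your proposal uses exactly the same model as the paper ($\mathbb N$ with a single absorbing point $\infty$ adjoined, and $0\cdot\infty=0$) and the same overall strategy: verify $\mathsf Q$, establish a structural lemma about how terms behave at $\infty$, conclude $\mathsf{I(=)}$, and then read off the failing literals and derive (ii) from $\mathsf{I(\ne)}\vdash Sx\ne x$. The only difference is cosmetic: the paper's lemma is stated as a dichotomy obtained by a direct induction on terms (``$t$ is constant in $x$, or $t(\omega,\vec y)=\omega$ and $t(x,\vec y)\geqslant x$''), whereas you first rewrite each term as $\sum_i c_i x^i$ over $M$ (which silently relies on the semiring identities holding in the extended structure — true, but worth a line of verification since $0\cdot\infty=0$ is the one non-obvious case) and then sort the resulting functions into four shapes; the two formulations carry the same information and the case analysis matching $t$'s shape against $s$'s is the same as the paper's three-way case split. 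So this is essentially the paper's proof in a slightly different presentation, and it is correct.
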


\begin{proof}
    (i) Consider the $\mathcal L_{ar}$-structure $\mathcal M$ with the universe $M = \mathbb N \cup \{\omega\}$ and the operations defined in the following way: on natural numbers operations are defined in the standard way, $S\omega = \omega$, $x + \omega = \omega + x = \omega$, $0 \cdot \omega = \omega \cdot 0 = 0$, $x \ne 0 \rightarrow x \cdot \omega = \omega \cdot x = \omega$.
    
    It is easy to see that $\mathcal M \vDash \mathsf Q$. It remains to show that $\mathcal M$ satisfies the induction scheme for formulas of the form $t = s$.
    
    \begin{lemma}
        Let $t(x, y_1, \dots, y_n)$ be a $\mathcal L_{ar}$-term and $y_1, \dots, y_n \in M$ are fixed. We say that term $t(x, \vec y)$ is constant in $x$ if $\exists z \in M \forall x \in M (t(x, \vec y) = z)$. Then $t(x, \vec y)$ is constant in $x$ or $t(\omega, \vec y) = \omega$. In the latter case, $t(x, \vec y) \geqslant x$ for all $x \in M$.
    \end{lemma}
    
    \begin{proof}
        Trivial induction on terms from variables $x, y_1, \dots, y_n$.
    \end{proof}
    
    Using this lemma, one can easily prove the claim. Suppose, $\mathcal M \vDash t(0, \vec y) = s(0, \vec y)$ and $\mathcal M \vDash \forall x (t(x, \vec y) = s(x, \vec y) \rightarrow t(Sx, \vec y) = s(Sx, \vec y))$. Since $S\omega = \omega$, the latter means $\forall n \in \mathbb N \Big(\mathcal M \vDash t(n, \vec y) = s(n, \vec y) \rightarrow t(Sn, \vec y) = s(Sn, \vec y) \Big)$. By the usual induction we obtain $\forall n \in \mathbb N \Big(\mathcal M \vDash t(n, \vec y) = s(n, \vec y)\Big)$. If $t(x, \vec y)$ and $s(x, \vec y)$ are constant in $x$, then $\mathcal M \vDash \forall x (t(x, \vec y) = s(x, \vec y))$ and the induction axiom holds. If both $t$ and $s$ are not constant in $x$, then $t(\omega, \vec y) = \omega = s(\omega, \vec y)$, so, $\mathcal M \vDash \forall x (t(x, \vec y) = s(x, \vec y))$. Assume that $t$ is constant in $x$, $s$ is not constant in $x$. If $t(x, \vec y) = n \in \mathbb N$, then $t(n + 1, \vec y) = n \ne s(n + 1, \vec y) \geqslant n + 1$. So, $t(\omega, \vec y) = \omega = s(\omega, \vec y)$. 
    
    Finally, note that the constructed model falsifies $Sx \ne x$ and $x + z = z + y \rightarrow z = y$ (since $S\omega = \omega$ and $\omega + 0 = \omega + 1$).
    
    (ii) Note that $\mathsf I(\ne) \vdash Sx \ne x$ ($S0 \ne 0$ and $Sx \ne x \rightarrow SSx \ne Sx$ are consequences of $\mathsf{Q}$, then apply the induction for the formula $Sx \ne x$). 
\end{proof}

\begin{proposition}
    \begin{enumerate}
        \item[(i)] $\mathsf I(=) \nvdash \forall x \exists y (y^r \leqslant x \wedge \neg (Sy)^r \leqslant x)$ for all $r \geqslant 2$ (i.e. the existence of integer part of r-th roots is unprovable);
        \item[(ii)] $\mathsf I(=) \nvdash \mathsf I(\leqslant),  I(\nleqslant)$.
    \end{enumerate}
    
\end{proposition}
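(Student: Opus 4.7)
The plan is to exhibit a single concrete model $\mathcal{N}$ that simultaneously satisfies $\mathsf{I(=)}$ and falsifies both the root-extraction formula of (i) and suitable induction instances witnessing (ii). Take $\mathcal{N}$ to be the non-negative part of the polynomial ring $\mathbb Z[X]$ ordered lexicographically so that the indeterminate $X$ exceeds every integer. This is a discretely ordered semiring and satisfies $\mathsf Q$: in particular Q3 holds because every positive element has positive leading coefficient and integer constant term, so its predecessor still lies in $\mathcal{N}$. The key step is to verify $\mathcal N \vDash \mathsf{I(=)}$. Given an induction instance for $\varphi(x,\vec y) := (t(x,\vec y) = s(x,\vec y))$ with parameters $\vec a \in \mathcal N$, the specialised maps $x \mapsto t(x,\vec a)$ and $x \mapsto s(x,\vec a)$ are polynomials in $x$ over the integral domain $\mathbb Z[X]$. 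The base and step of the induction force them to agree at every standard natural $n \in \mathbb N \subseteq \mathcal N$ (by external induction), giving infinitely many roots of their difference, so the difference is the zero polynomial and equality holds at every $x \in \mathcal N$.

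For (i), setting $x = X$ does the job: any candidate $y \in \mathcal N$ is either a non-negative integer, in which case $y^r$ and $(Sy)^r$ are integer constants and hence strictly less than $X$ in our ordering, or else $y$ has positive $X$-degree, in which case $y^r$ has $X$-degree at least $r \geqslant 2$ and already exceeds $X$; no $y$ works. For (ii), it suffices to find failing induction instances in $\mathcal N$. The $\leqslant$-formula $\varphi(y) := y^2 \leqslant X$ satisfies $\varphi(0)$ and the step $\forall y(\varphi(y) \to \varphi(Sy))$ in $\mathcal N$ (for natural $y$ both sides hold, and for $y$ of positive $X$-degree the antecedent is false, so the implication is vacuous), yet $\varphi(X)$ fails since $X^2 > X$; this witnesses $\mathcal N \nvDash \mathsf{I(\leqslant)}$. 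Analogously the $\nleqslant$-formula $\psi(y) := X \nleqslant y^2$ satisfies $\psi(0)$ and the step (for natural $y$ both $X > y^2$ and $X > (Sy)^2$; for $y$ of positive $X$-degree the antecedent $X \nleqslant y^2$ is false), while $\psi(X)$ fails, so $\mathcal N \nvDash \mathsf{I(\nleqslant)}$.

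The main obstacle is the clean verification of $\mathsf{I(=)}$ inside $\mathcal N$: even though parameters may carry the transcendental $X$, one has to see that specialising all variables other than the induction variable puts the formula back into a one-variable polynomial identity over the zero-divisor-free ring $\mathbb Z[X]$, which is precisely what licenses the \emph{infinitely many roots} argument. A minor but necessary check is that the relation $\leqslant$ in $\mathcal N$ (defined by Q8) coincides with the lex order, so that the interpretations of $y^2 \leqslant X$ and $X \nleqslant y^2$ behave as claimed; together with discreteness of the order and Q3 this is routine but should be spelled out before the polynomial argument is invoked.
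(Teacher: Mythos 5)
Your proposal uses the same model as the paper — the nonnegative part of $\mathbb Z[X]$ with the leading-coefficient ordering (which coincides with the paper's ``eventual domination'' order and with the $\leqslant$ defined by Q8) — and the same overall strategy: verify $\mathsf I(=)$ by reducing each specialised induction instance to a one-variable polynomial identity over the integral domain $\mathbb Z[X]$ and noting it has too many roots, then take $X$ as the witness falsifying root extraction for (i) and the corresponding instances $y^r \leqslant X$, $X \nleqslant y^r$ for (ii). The only cosmetic difference is that the paper closes the $\mathsf I(=)$ step by inverting an explicit Vandermonde system at the sample points $0,\dots,n$, whereas you invoke ``infinitely many roots implies zero polynomial''; these are interchangeable over an integral domain, so the argument is essentially identical.
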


\begin{proof}
    (i) Consider the structure $\mathbb Z[X]^+ = \{a_n X^n + \dots + a_0 \in \mathbb Z[X]| a_n > 0 \vee a_n X^n + \dots + a_0 = 0\}$ with $S$, $+$ and $\cdot$ defined in the usual way and $f \leqslant g \leftrightharpoons f(x) \leqslant g(x)$ for all sufficiently large $x$. It is obvious that $\mathbb Z[X]^+ \vDash \mathsf Q$. 
    
    Let $t(x, \vec y), s(x, \vec y)$ be $\mathcal L_{ar}$-terms, $y_1, \dots, y_m \in \mathbb Z[X]^+$ are fixed. Suppose $t(0, \vec y) = s(0, \vec y)$ and $\forall x \big(t(x, \vec y) = s(x, \vec y) \rightarrow t(Sx, \vec y) = s(Sx, \vec y)\big)$. Then for all $k \in \mathbb N$ $t(k, \vec y) = s(k, \vec y)$. We can represent $t(x, \vec y) - s(x, \vec y)$ as $x^n P_n(\vec y) + \dots + P_0(\vec y)$, where $P_i(\vec y) \in \mathbb Z[\vec y]$. Considering $k = 0, 1, \dots, n$ we obtain
    $$
    \begin{pmatrix}
    1 & 0 & \cdots & 0 \\
    1 & 1 & \cdots & 1 \\
    \vdots & & \ddots & \vdots \\
    1 & n & \cdots & n^n
    \end{pmatrix}
    \begin{pmatrix}
    P_0(\vec y) \\
    \vdots \\
    P_n(\vec y)
    \end{pmatrix}
    =
    \begin{pmatrix}
    0 \\
    \vdots \\
    0
    \end{pmatrix}.
    $$
    
    Since the left matrix is invertible (it is a Vandermonde matrix), 
    $\begin{pmatrix}
    P_0(\vec y) \\
    \vdots \\
    P_n(\vec y)
    \end{pmatrix}
    =
    \begin{pmatrix}
    0 \\
    \vdots \\
    0
    \end{pmatrix}$. So, $\forall x \big(t(x, \vec y) = s(x, \vec y)\big)$ and $\mathbb Z[X]^+ \vDash \mathsf I (=)$.
    
    Let us now prove that $\mathbb Z[X]^+ \nvDash \exists y (y^r \leqslant X \wedge \neg (y + 1)^r \leqslant X)$ for $r \geqslant 2$. Consider any $y \in \mathbb Z[X]^+$. If $\deg y = 0$, then $(y + 1)^r \in \mathbb N$, so $(y + 1)^r < X$. If $\deg y \geqslant 1$, then $\deg y^r > 1$, so $y^r > X$.
    
    (ii) It is easy to see that $\mathbb Z[X]^+ \nvDash \mathsf I(\leqslant)$. Consider the induction axiom for the formula $x^r \leqslant y$. Suppose it holds in $\mathbb Z[X]^+$. Since $\mathbb Z[X]^+ \vDash 0^r \leqslant y, \neg \forall x (x^r \leqslant y)$, $\mathbb Z[X]^+ \vDash \exists x (x^r \leqslant y \wedge \neg (Sx)^r \leqslant y)$. So we obtain a contradiction. In the similar way we can prove $\mathbb Z[X]^+ \nvDash \mathsf I(\nleqslant)$.
\end{proof}

\begin{proof}[Proof of theorem 3.2.]
    (i) We claim that if some equation $s = t$ has a solution in a model of $\mathsf I(=)$, then it has a solution in the model $\mathcal M$ from Proposition 3.1. 
    
    Since in $\mathsf I(=)$ one can prove the commutativity, associativity and distributivity of addition and multiplication, all terms can be represented as 
    $$s(\vec x) = \sum\limits_{(i_1, \dots, i_n): i_1 + \dots + i_n \leqslant k} a_{i_1, \dots, i_n} x_1^{i_1} \dots x_n^{i_n},$$ 
    where $k$ is a natural number and all $a_{i_1, \dots, i_n}$ are numerals. It is clear that such a form can be found effectively. Let $\deg s := \max\{i_1 + \dots + i_n | a_{i_1, \dots, i_n} \ne 0\}$.
    
    Let us fix two terms $s(\vec x)$ and $t(\vec x)$. Consider three cases: 1) $\deg s = \deg t = 0$, 2) $\deg s > 0$, $\deg t = 0$ (or, symmetrically, $\deg s = 0$, $\deg t > 0$), 3) $\deg s > 0$, $\deg t > 0$.
    
    1) $s$ and $t$ are constants, so it is easy to check whether they are equal.
    
    2) Suppose there is $\mathcal N \vDash \mathsf I(=)$ such that $s(\vec x) = t(\vec x)$ for some $x_1, \dots, x_n \in N$. Let $s(\vec x) = \sum\limits_{\substack{(i_1, \dots, i_n): \\ i_1 + \dots + i_n \leqslant k}} a_{i_1, \dots, i_n} x_1^{i_1} \dots x_n^{i_n}$ in $\mathcal N$. Suppose that for some $j$ $x_j$ is a nonstandard. Then for all $i_1, \dots, i_n$ such that $i_1 + \dots + i_n \leqslant k$ either $i_j = 0$ or $a_{i_1, \dots, i_n} x_i^{i_1} \dots x_{j - 1}^{i_{j - 1}} x_{j + 1}^{i_{j + 1}} \dots x_{n}^{i_n} = 0$ (otherwise $a_{i_1, \dots, i_n} x_1^{i_1} \dots x_n^{i_n}$ and $s(\vec x)$ would be nonstandard, which is contradictory since $t(\vec x)$ is a standard). So, if we replace $x_i$ by $0$, $s(\vec x)$ will not change its value. Since that we can replace all of nonstandard $x_i's$ by $0$ and obtain a solution of the considered equation in $\mathbb N$ (and hence in $\mathcal M$). Also it is clear that all $x_i$ can be bounded by $t$.
    
    3) All such equations can be satisfied by taking $x_i = \omega$ ($s(\omega, \dots, \omega) = \omega = t(\omega, \dots, \omega)$).
    
    From this we can easily obtain an algorithm to decide whether $s = t$ is satisfiable in $\mathsf I(=)$.
    
    (ii) In fact, $Th_=(\mathbb N)$ can be deduced from $\mathsf Q$ and commutativity, associativity and distributivity of addition and multiplication. 
    
    Let us fix terms $s(\vec x)$ and $t(\vec x)$ such that $\mathbb N \vDash \forall \vec x (s(\vec x) = t(\vec x))$. As in (i), $s$ and $t$ can be represented as polynomials. Since they are equal in $\mathbb N$, they have equal coefficients and hence their equality is provable.
    
    (iii) We introduce the following model $\mathcal N$: $N = \mathbb N \cup \{\omega_0, \omega_1\}$, operations on natural numbers defined in the standard way, 
    \begin{itemize}
        \item $S \omega_i = \omega_i$, where $i \in \{0, 1\}$,
        \item $\omega_i + n = n + \omega_i = \omega_i$, where $n \in \mathbb N$, $i \in \{0, 1\}$,
        \item $\omega_i + \omega_j = \omega_{\max(i, j)}$, $i, j \in \{0, 1\}$,
        \item $0 \cdot \omega_i = \omega_i \cdot 0 = 0$, $i \in \{0, 1\}$,
        \item $n \cdot \omega_i = \omega_i \cdot n = \omega_i$, $i \in \{0, 1\}$, $n \in \mathbb N \setminus \{0\}$,
        \item $\omega_i \cdot \omega_j = \omega_{\max(i, j)}$, $i, j \in \{0, 1\}$.
    \end{itemize}
    
    $\mathcal N \vDash \mathsf Q$ and operations in $\mathcal N$ are commutative, associative and distributive, so $\mathcal N \vDash Th_=(\mathbb N)$. But $\mathcal N \vDash 0 + \omega_0 = \omega_0 \wedge \forall x (x + \omega_0 = \omega_0 \rightarrow S x + \omega_0 = \omega_0) \wedge \omega_1 + \omega_0 \ne \omega_0$, so $\mathcal N \nvDash \mathsf I (=)$.
    
\end{proof}

\begin{proposition}
    (i) $\mathbb Z[X]^+ \vDash \mathsf{I}(\ne)$;
    
    (ii) $\mathsf I(\ne) \nvdash \mathsf I(\leqslant), \mathsf I(\nleqslant)$.
\end{proposition}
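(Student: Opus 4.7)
Part (i): The plan is to leverage the fact that $\mathbb Z[X]$ is an integral domain. I fix parameters $\vec y \in \mathbb Z[X]^+$ and terms $t, s$, and set $P(x) := t(x, \vec y) - s(x, \vec y) \in \mathbb Z[X][x]$. Suppose the induction axiom for $t(x, \vec y) \ne s(x, \vec y)$ fails: then $P(0) \ne 0$, the contrapositive of the induction step reads $P(x+1) = 0 \Rightarrow P(x) = 0$ for all $x \in \mathbb Z[X]^+$, and $P(x_0) = 0$ for some $x_0 \in \mathbb Z[X]^+$. Since $\mathbb Z[X]^+ \vDash \mathsf{Q}$, every nonzero element of $\mathbb Z[X]^+$ has a unique predecessor (obtained by decrementing its constant term, which keeps the leading coefficient unchanged), so iterating the contrapositive yields a backward chain of roots $x_0, x_0 - 1, x_0 - 2, \dots$ sitting inside $\mathbb Z[X]^+$.

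I would then split on $\deg x_0$. If $\deg x_0 = 0$, then $x_0 \in \mathbb N$ and the chain terminates at $0$ after exactly $x_0$ steps, giving $P(0) = 0$ and contradicting the base case. If $\deg x_0 \geq 1$, subtracting $1$ from the constant term preserves the positive leading coefficient, so all $x_0 - k$ remain in $\mathbb Z[X]^+$ and are pairwise distinct inside $\mathbb Z[X]$. Hence $P$ has infinitely many roots in the integral domain $\mathbb Z[X]$, forcing $P \equiv 0$ in $\mathbb Z[X][x]$, which again yields $P(0) = 0$ -- a contradiction.

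For (ii) no new argument is needed: Proposition 3.2(ii) already exhibits $\mathbb Z[X]^+$ as a model of $\mathsf Q$ falsifying both $\mathsf I(\leqslant)$ and $\mathsf I(\nleqslant)$, and by (i) this same model satisfies $\mathsf I(\ne)$. I do not expect any serious obstacle in this proof; the only care point is confirming that the descending chain stays inside $\mathbb Z[X]^+$ in the degree-$\geq 1$ case, which is immediate from the explicit description of $\mathbb Z[X]^+$ as polynomials with positive leading coefficient together with $0$.
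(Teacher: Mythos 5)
Your argument is correct and follows essentially the same route as the paper: contrapose the induction step to produce a descending chain of roots $x_0, x_0 - 1, \dots$ of $P(x) = t(x,\vec y) - s(x,\vec y)$ viewed as a polynomial over $\mathbb Z[X]$, then conclude $P \equiv 0$ and contradict $P(0)\ne 0$. The only cosmetic difference is the final step: the paper evaluates at $x_0, x_0-1,\dots,x_0-n$ and invokes invertibility of the resulting Vandermonde matrix over $\mathbb Z(X)$, whereas you appeal directly to the fact that a nonzero polynomial over the integral domain $\mathbb Z[X]$ has finitely many roots. You are also a bit more explicit than the paper in splitting on $\deg x_0$ (the $\deg x_0 = 0$ case cannot in fact occur, since the base and step already force $P(k)\ne 0$ for every standard $k$, which is what tacitly licenses the paper's "$x - k$ for all $k\in\mathbb N$"). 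Part (ii) is handled identically by reusing $\mathbb Z[X]^+$ from Proposition 3.2.
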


\begin{proof}
    (i) We only need to prove $\mathbb Z[X]^+ \vDash Ind_{s(x, \vec y) \ne t(s, \vec y)}$, where $s$ and $t$ are terms. Fix these terms and $\vec y$. There are $P_n(\vec y), \dots, P_0(\vec y) \in \mathbb Z[X]$ such that $s(x, \vec y) - t(x, \vec y) = P_n(\vec y) x^n + \dots + P_0(\vec y)$. Suppose, $P_n(\vec y) 0^n + \dots + P_0(\vec y) = P_0(\vec y) \ne 0$, $\forall x (P_n(\vec y) x^n + \dots + P_0(\vec y) \ne 0 \rightarrow P_n(\vec y) (S x)^n + \dots + P_0(\vec y) \ne 0)$, but $\exists x \in \mathbb Z[X]^+: P_n(\vec y) x^n + \dots + P_0(\vec y) = 0$. Then, for all $k \in \mathbb N$ $P_n(\vec y) (x - k)^n + \dots + P_0(\vec y) = 0$ (since we can apply a contraposition to the step and the usual induction). 
    $$
    \begin{pmatrix}
    1 & x & \cdots & x^n \\
    1 & x - 1 & \cdots & (x - 1)^n \\
    \vdots & & \ddots & \vdots \\
    1 & x - n & \cdots & (x - n)^n
    \end{pmatrix}
    \begin{pmatrix}
    P_0(\vec y) \\
    \vdots \\
    P_n(\vec y)
    \end{pmatrix}
    =
    \begin{pmatrix}
    0 \\
    \vdots \\
    0
    \end{pmatrix}.
    $$
\end{proof}

Since this matrix is invertible (in $\mathbb Z (X)$), $P_n (\vec y) = \dots = P_0(\vec y) = 0$. So, $\forall x \in \mathbb{Z}[X] (s(x, \vec y) = t(x, \vec y))$, a contradiction.

(ii) We have already proved in Proposition 3.2 that $\mathbb Z^+ [X] \nvDash \mathsf I(\leqslant), \mathsf I(\nleqslant)$

\begin{proposition}
\begin{enumerate}
    \item[(i)] $\mathsf I (\leqslant) \nvdash x + y = y + x, x \cdot y = y \cdot x, S x \ne x$
    \item[(ii)] $\mathsf I (\leqslant) \nvdash \mathsf I (=), \mathsf I (\ne), \mathsf I (\nleqslant)$.
\end{enumerate}
\end{proposition}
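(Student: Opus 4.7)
The plan is to prove part (i) by constructing a single model $\mathcal{K} \vDash \mathsf{I}(\leqslant)$ that simultaneously refutes $Sx \ne x$, $x + y = y + x$, and $x \cdot y = y \cdot x$; part (ii) will then follow because each of $\mathsf{I}(=)$, $\mathsf{I}(\ne)$, and $\mathsf{I}(\leqslant) + \mathsf{I}(\nleqslant)$ proves some formula that $\mathcal{K}$ refutes.

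For the construction I would take $K = \mathbb{N} \cup \{a, b\}$ with $a \ne b$, extend standard arithmetic on $\mathbb{N}$ by $Sa = a$ and $Sb = b$, and make $a, b$ \emph{left-absorbing} for both operations: $a + x = a$, $b + x = b$ for every $x \in K$; $k + a = a$, $k + b = b$ for $k \in \mathbb{N}$; and analogously for $\cdot$, except $0$ annihilates (so $0 \cdot a = a \cdot 0 = 0$, while $k \cdot a = a \cdot k = a$ and $k \cdot b = b \cdot k = b$ for $k \geqslant 1$, $a \cdot a = a$, $b \cdot b = b$, $a \cdot b = a$, $b \cdot a = b$). A direct check gives $\mathcal{K} \vDash \mathsf{Q}$, and the three failures are immediate: $Sa = a$ refutes $Sx \ne x$, $a + b = a \ne b = b + a$ refutes additive commutativity, and $a \cdot b = a \ne b = b \cdot a$ refutes multiplicative commutativity. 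The non-trivial part of (i) is showing $\mathcal{K} \vDash \mathsf{I}(\leqslant)$. I would first prove a term-classification lemma in the spirit of the lemma inside the proof of Proposition 3.1(i): for any term $t(x, \vec{y})$ with $\vec{y}$ fixed, either $t(x, \vec{y})$ is constant in $x$ on $K$, or both $t(a, \vec{y})$ and $t(b, \vec{y})$ lie in $\{a, b\}$; moreover, if $t(n, \vec{y}) \in \mathbb{N}$ and is bounded as $n$ ranges over $\mathbb{N}$, then $t$ is constant on $K$. Both clauses go by induction on terms, using left-absorption at $+, \cdot$ and the monotonicity of term-values on $\mathbb{N}$. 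Given this lemma and any instance of $\leqslant$-induction whose base and step hold in $\mathcal{K}$, the step is trivial at $x \in \{a, b\}$ since $Sa = a$ and $Sb = b$, and standard induction supplies $t(n, \vec{y}) \leqslant s(n, \vec{y})$ for all $n \in \mathbb{N}$. The only obstruction to the conclusion at $x = a$ is $t(a, \vec{y}) \in \{a, b\}$ together with $s(a, \vec{y}) \in \mathbb{N}$; the lemma then forces $s$ globally constant with standard value $c$, so $t(n, \vec{y}) \leqslant c$ for every $n \in \mathbb{N}$, and the lemma applied to $t$ forces $t$ also constant on $K$, contradicting $t(a, \vec{y}) \in \{a, b\}$.

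Part (ii) is immediate from (i). Since $\mathsf{I}(=) \vdash x + y = y + x$ by the Remark after Proposition 1.1 and $\mathcal{K}$ refutes this, $\mathsf{I}(\leqslant) \nvdash \mathsf{I}(=)$; since $\mathsf{I}(\ne) \vdash Sx \ne x$ exactly as in the proof of Proposition 3.1(ii) and $\mathcal{K}$ refutes it, $\mathsf{I}(\leqslant) \nvdash \mathsf{I}(\ne)$; for the last non-derivation I would verify $\mathsf{I}(\nleqslant) \vdash Sx \nleqslant x$ by induction on $x$ (base $S0 \nleqslant 0$ from Q1 via Q4, Q5; step by contraposition and Q2) together with $\mathsf{I}(\leqslant) \vdash x \leqslant x$ by induction on the formula $x \leqslant x$ itself (base $0 \leqslant 0$ via $r = 0$ and Q4; step carrying the witness $r$ through Q5), so $\mathsf{I}(\leqslant) + \mathsf{I}(\nleqslant) \vdash Sx \ne x$, which is again refuted by $\mathcal{K}$, giving $\mathsf{I}(\leqslant) \nvdash \mathsf{I}(\nleqslant)$.

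The main hurdle will be the verification that $\mathcal{K} \vDash \mathsf{I}(\leqslant)$: relative to the one-nonstandard model of Proposition 3.1, the two ``top'' elements $a, b$ combined with non-commutative $+$ and $\cdot$ make the term-classification lemma noticeably more delicate, and the crucial point is ruling out the possibility that a term bounded in $\mathbb{N}$ on standard arguments suddenly ``jumps'' to a value in $\{a, b\}$ at $x = a$ or $x = b$.
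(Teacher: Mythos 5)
Your model $\mathcal{K}$ is exactly the paper's model $\mathcal{M}$ from Proposition 3.4 (just with $\omega_0,\omega_1$ renamed to $a,b$), and your strategy of verifying $\mathcal{K}\vDash\mathsf{I}(\leqslant)$ via a term-classification lemma in the spirit of Proposition 3.1 is the paper's strategy too, so the proposal is essentially the same proof. The one small divergence is in ruling out $\mathsf{I}(\nleqslant)$: the paper simply exhibits the failing instance $Ind_{\omega_0\nleqslant x}$ in $\mathcal{M}$ (base and step hold, but $\omega_0\leqslant\omega_0$), whereas you derive $\mathsf{I}(\leqslant)+\mathsf{I}(\nleqslant)\vdash Sx\ne x$ and use $Sa=a$ --- both are fine. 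As a minor suggestion, the term induction (particularly the multiplication case, where $0$-values must be handled) goes through more uniformly if you carry the single invariant ``$t$ is constant on $K$, or $t(x)\geqslant x$ for all $x\in K$,'' from which both your disjunct about $t(a),t(b)$ and your boundedness ``moreover'' clause follow at once.
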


\begin{proof}
    (i) Consider the model $\mathcal M$: $M = \mathbb N \cup \{\omega_0, \omega_1\}$ with operations defined as follows (on $\mathbb N$ all operations defined in the standard way):
    \begin{itemize}
        \item $S \omega_i = \omega_i$, where $i \in \{0, 1\}$
        \item $\omega_i + x = \omega_i$, $n + \omega_i = \omega_i$, where $i \in \{0, 1\}$, $x \in M$, $n \in \mathbb N$
        \item $0 \cdot x = x \cdot 0 = 0$, $\omega_i \cdot x = \omega_i$, $n \cdot \omega_i = \omega_i$, where $i \in \{0, 1\}$, $x \in M \setminus \{0\}$, $n \in \mathbb N \setminus \{0\}$
        \item $n \leqslant \omega_i$, $\omega_i \leqslant \omega_j$, where $i, j \in \{0, 1\}$, $n \in \mathbb N$
    \end{itemize}
    
    We prove that $\mathcal M$ is a model of $\mathsf{I(\leqslant)}$. It is not very hard to see that $\mathcal M \vDash \mathsf Q$. Let, for example, check $\mathcal M \vDash \forall x \forall y (x \cdot S y = x \cdot y + x)$. Fix $x, y \in M$. If $x, y \in \mathbb N$ or $x = 0$, it is obvious. Consider the case $x \in \mathbb N \setminus \{0\}$ and $y = \omega_i$: $x \cdot S y = x \cdot \omega_i = \omega_i = \omega_i + \omega_i = x \cdot y + y$. If $x = \omega_i$, then $x \cdot S y = \omega_i = x \cdot y + x$. 
    
    As in Proposition 3.1 we can formulate the analogous lemma about terms (i.e. for every term $t(x, \vec y)$ and fixed $y_1, \dots, y_n \in M$  $\exists z \in M \forall x \in M \: t(x, \vec y) = z$ or $t(\omega_i, \vec y) = \omega_i, i = 0, 1$ and $\forall x \in M \: t(x, \vec y) \geqslant x$) and end the proof of $\mathcal M \vDash \mathsf{I(\leqslant)}$ in a similar way.

    Now, $\mathcal M \vDash \omega_0 + \omega_1 \ne \omega_1 + \omega_0, \omega_0 \cdot \omega_1 \ne \omega_1 \cdot \omega_0, S \omega_0 = \omega_0$, as required.
    
    (ii) Easy follows from (i) since $\mathsf I(=) \vdash x + y = y + x, x \cdot y = y \cdot x$, $\mathsf I (\ne) \vdash S x \ne x$ and $\mathcal M \nvDash (\omega_0 \nleqslant 0) \wedge \forall x (\omega_0 \nleqslant x \rightarrow \omega_0 \nleqslant Sx) \rightarrow \forall x (\omega_0 \nleqslant x)$.
\end{proof}

\begin{proof}[Proof of the Theorem 3.1.]
    Follows from Propositions 3.1-3.4.
\end{proof}

\begin{proof}[Proof of the Theorem 3.3.]
    Firstly, we prove the following lemma.
    
    \begin{lemma}
        $\mathsf{I}(\ne) + \forall x \forall y (x + y = y + x)$ proves associativity, commutativity and distributivity of $+$ and $\cdot$. 
    \end{lemma}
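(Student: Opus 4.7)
The plan is to establish the four identities — associativity of $+$, distributivity, commutativity of $\cdot$, and associativity of $\cdot$ — via a sequence of $\mathsf{I}(\ne)$-inductions resting on two preliminary facts. First, from $\mathsf Q$ plus commutativity alone (no induction) we get the left-identity and left-successor laws $0+x = x$ (Q4) and $Sx+y = S(x+y)$ (since $Sx+y = y+Sx = S(y+x) = S(x+y)$ by two applications of commutativity and Q5). Second, we prove cancellation of $+$, i.e., $x \ne y \to \forall z\,(x+z \ne y+z)$: fixing $x \ne y$, induct on $z$ in $\psi(z) := x+z \ne y+z$; the base is the hypothesis and the step reduces via Q5 to $S(x+z) \ne S(y+z)$, which is the contrapositive of Q2. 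These will be used freely below.

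The main step is associativity $(x+y)+z = x+(y+z)$. The difficulty is that the natural $\ne$-induction on $\psi(z) := (x+y)+z \ne x+(y+z)$ has a valid step (Q5 applied to both sides and then the contrapositive of Q2) but a vacuously false base $x+y \ne x+y$, so $\mathsf{I}(\ne)$ does not directly apply. The plan is to \emph{shift the base by $S$}: the formula $\chi(z) := S((x+y)+z) \ne x+(y+z)$ has base $S(x+y) \ne x+y$, which is a provable instance of $Sa \ne a$ (itself a short $\mathsf{I}(\ne)$-induction on $a$ from Q1 and the contrapositive of Q2), while the step is the contrapositive of Q2 as before. Hence $\forall z\,(x+(y+z) \ne S((x+y)+z))$; symmetrically $\forall z\,((x+y)+z \ne S(x+(y+z)))$, and running analogous shifted inductions with $S^k$ excludes every standard difference between the two sides. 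To rule out a nonstandard offset and conclude full equality, assume for contradiction $A := (x+y)+z \ne x+(y+z) =: B$ and use commutativity $A+B = B+A$ in concert with cancellation: this lets us package the would-be counterexample into an auxiliary $\mathsf{I}(\ne)$-induction whose conclusion collides with a commutativity-derived equality, forcing a contradiction. This last packaging step is where commutativity is used essentially and is the main technical obstacle of the lemma.

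Once associativity of $+$ is in hand, the remaining identities follow along the same pattern. For commutativity of $\cdot$, I induct on $y$ in $\psi(y) := xy \ne yx$, having first established the preliminaries $0 \cdot y = 0$ and $Sx \cdot y = xy + y$ via analogous shifted $\mathsf{I}(\ne)$-inductions built on Q6, Q7 and additive associativity. Distributivity $x(y+z) = xy + xz$ and associativity $(xy)z = x(yz)$ are then routine $\mathsf{I}(\ne)$-inductions on $z$, each facing (and dispatching, by the shift trick) the same vacuous-base obstacle. The recurring theme is that, unlike $\mathsf{I}(=)$, the $\mathsf{I}(\ne)$ schema cannot prove universally true equations directly; it proves them indirectly by establishing that any purported counterexample, through commutativity and cancellation, clashes with one of a handful of easily-derivable $\ne$-statements such as $Sa \ne a$.
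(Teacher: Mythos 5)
Your preliminaries (cancellation of $+$, $Sa \ne a$, the left-identity and left-successor laws) are all fine and are indeed the right warm-up. But the core of your argument for associativity of $+$ is not a proof. The ``shift by $S$'' inductions on $\chi_k(z) := S^k((x+y)+z) \ne x+(y+z)$ only rule out each \emph{standard} offset $k$ between the two sides; you cannot run infinitely many inductions inside a finitary derivation, and there is no single $\ne$-formula that quantifies over $k$. You acknowledge this and then defer the entire burden to what you call the ``packaging step'' --- ``use commutativity $A+B=B+A$ in concert with cancellation'' to force a contradiction --- but that step is precisely the content of the lemma, and you leave it as an unexplained gesture. Nothing in the sketch shows how a \emph{single} instance of $\ne$-induction turns the assumption $A \ne B$ together with commutativity into a contradiction.

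The paper supplies exactly the missing device, and it makes the shifted-base detour unnecessary. Assuming $A := x+(y+z) \ne (x+y)+z =: B$, one inducts on $t$ in
$$\varphi(t) \;:=\; \bigl(x+(y+z)\bigr) + \bigl((x+y)+t\bigr) \;\ne\; \bigl((x+y)+z\bigr) + \bigl(x+(y+t)\bigr).$$
By cancellation, $\varphi(0)$ is equivalent to $A \ne B$, so the base holds under the contradiction assumption; the step reduces, via Q5 and the contrapositive of Q2, to $\varphi$ at the previous value; and substituting $t := z$ yields $A + B \ne B + A$, contradicting commutativity. The design principle you are missing is this: append to each side a ``deformation parameter'' term so that at $t=0$ the appended summands are syntactically identical (letting cancellation reduce the base to $A \ne B$), while at $t=z$ they become $B$ and $A$ respectively (so the conclusion is the absurdity $A+B \ne B+A$). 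The remaining identities (distributivity, commutativity and associativity of $\cdot$) go through by the same pattern with analogous $\varphi$'s; they are ``routine'' only once this device is in hand, and your shift trick is neither needed nor sufficient for them.
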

    
    \begin{proof}
        \begin{itemize}
            \item Associativity of addition.
            
            Suppose there is $x, y, z$ such that $x + (y + z) \ne (x + y) + z$. Consider the formula $\varphi(x, y, z, t) := \Big((x + (y + z)) + ((x + y) + t) \ne ((x + y) + z) + (x + (y + t))\Big)$. Then, 
            $$\varphi(x, y, z, 0) \leftrightarrow (x + (y + z)) + (x + y) \ne ((x + y) + z) + (x + y) $$
            $\:\:\:\:\:\:\:\:\:\:\:\:\:\:\:\:\:\:\:\:\:\:\:\:\:\:\:\:\:\:\:\:\:\:\:\:\:\:\:\:\:\:\:\:\:\:\:\:\:\:\:\:\:\:\:\:\:\:\:\:\:\:\:\:\:
            \leftrightarrow x + (y + z) \ne (x + y) + z$
            
            (the latter equivalence is true since $\mathsf{I}(\ne) \vdash a + c = b + c \rightarrow a = b$). So, $\varphi(x, y, z, 0)$ is true.
            
            Suppose, $\varphi(x, y, z, t)$ is true, but $\varphi(x, y, z, St)$ is false. Then, 
            $$\neg\varphi(x, y, z, St) \leftrightarrow (x + (y + z)) + S((x + y) + t) = ((x + y) + z) + (x + S(y + t)) $$
            $\:\:\:\:\:\:\:\:\:\:\:\:\:\:\:\:\:\:\:\:\:\:\:\:\:\:\:\:\:\:\:\:\:\:\:\:\:\:\:\:\:\:\:\:\:\:\:\:\:\:\:\:\:\:
            \leftrightarrow S((x + (y + z)) + ((x + y) + t)) = S(((x + y) + z) + (x + (y + t)))$
            
            $\:\:\:\:\:\:\:\:\:\:\:\:\:\:\:\:\:\:\:\:\:\:\:\:\:\:\:\:\:\:\:\:\:\:\:\:\:\:\:\:\:\:\:\:\:\:\:\:\:\:\:\:\:\:
            \leftrightarrow (x + (y + z)) + ((x + y) + t) = ((x + y) + z) + (x + (y + t))$
            
            $\:\:\:\:\:\:\:\:\:\:\:\:\:\:\:\:\:\:\:\:\:\:\:\:\:\:\:\:\:\:\:\:\:\:\:\:\:\:\:\:\:\:\:\:\:\:\:\:\:\:\:\:\:\:
            \leftrightarrow \neg \varphi(x, y, z, t),$
            
            so, we have got a contradiction. Applying induction to the formula $\varphi$, we obtain $\forall t \:\varphi(x, y, z, t)$. Now, substitute $z$ instead of $t$: $$(x + (y + z)) + ((x + y) + z) \ne ((x + y) + z) + (x + (y + z)),$$
            
            contradiction with commutativity of addition.
            
            \item Right distributivity.
            
            Suppose there is $x, y, z$ such that $x(y + z) \ne xy + xz$. Consider the formula $\varphi(x, y, z, t) = \Big(x(y + z) + xy + xt \ne xy + xz + x(y + t)\Big)$ (since we have already proved associativity we can write terms as $s + t + r$).
            
            It is easy to see that $\varphi(x, y, z, 0)$ and $\neg\varphi(x, y, z, St) \rightarrow \neg \varphi(x, y, z, t)$ are true. By induction we obtain $\forall t \: \varphi(x, y, z, t)$. After substitution $t := z$ we obtain a contradiction with commutativity.
        \end{itemize}
        
        All other identities can be proven in the same way. Let's list only the formulas $\varphi(x, y, z, t)$.
        
        \begin{itemize}
            \item Left distributivity: $\varphi(x, y, z, t) = \Big((x + y)z + xz + yt \ne xy + yz + (x + y)t\Big)$;
            
            \item commutativity of multiplication: $\varphi(x, y, z, t) = \Big(xy + yt \ne yx + ty\Big)$;
            
            \item associativity of multiplication: $\varphi(x, y, z, t) = \Big(x(yz) + (xy)t \ne (xy)z + x(yt)\Big)$.
        \end{itemize}
    \end{proof}
    
    Let $\mathcal{M} \vDash \mathsf{I}(\ne) + \forall x \forall y (x + y = y + x)$. By lemma 3.2 $\mathcal{M}$ is a semiring that can be embedded in a ring $\widetilde{\mathcal{M}}$ (as in the proof of the Theorem 2.1). Let $f \in \widetilde{\mathcal{M}}[t]$. We prove by induction on $\deg f$ that the induction for the formula $f(x) = 0$ holds.
    
    If $\deg f = 0$, then $f(0) = f(x)$ for all $x \in M$. If $f(0) = 0$, then $\forall x (f(x) = 0)$.
    
    Let $\deg f = n > 0$, $f(0) = 0$ and $\forall x \in M (f(x) = 0 \rightarrow f(Sx) = 0)$. For $g \in \widetilde{\mathcal{M}}[t]$ we define $\tilde g (t) = g(St) - g(t)$. Denote by $\tilde g^{(k)}$ the $\tilde{(\cdot)}$ applied to $g$ $k$ times.
    
    \begin{proposition}
        $\forall k < n \forall x \in M (\tilde f^{(n - k)}(x) = 0)$.
    \end{proposition}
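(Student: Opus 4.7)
The plan is to establish the conclusion by downward induction on $j := n - k$, running from $j = n$ down to $j = 1$. The whole argument is driven by the outer induction hypothesis on $\deg f$, which makes induction for the formula $g(x) = 0$ available in $\mathcal M$ for every polynomial $g \in \widetilde{\mathcal M}[t]$ of degree strictly less than $n$.

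Before starting the induction I would record two preliminaries. By a finite iteration of the given step $\forall x (f(x) = 0 \rightarrow f(Sx) = 0)$ starting from $f(0) = 0$, one obtains $\mathcal M \vDash f(S^i 0) = 0$ for each standard $i \leqslant n$; this needs no induction axiom, only $n$ instances of modus ponens. A routine expansion of the finite-difference operator (by induction on $j$, using the identity $\tilde g(0) = g(S 0) - g(0)$ applied to $g = \tilde f^{(j)}$) shows that $\tilde f^{(j)}(0)$ equals the standard $\mathbb Z$-linear combination $\sum_{i = 0}^{j} (-1)^{j - i} \binom{j}{i} f(S^i 0)$ in the ring $\widetilde{\mathcal M}$; hence $\tilde f^{(j)}(0) = 0$ for every $j \in \{0, 1, \ldots, n\}$.

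For the downward induction itself, the base case $j = n$ is immediate: $\tilde f^{(n)}$ has degree at most $0$ in $\widetilde{\mathcal M}[t]$, so it is a constant polynomial, and that constant equals $\tilde f^{(n)}(0) = 0$. For the inductive step, suppose $\tilde f^{(j + 1)}(x) = 0$ for all $x \in M$, where $1 \leqslant j < n$. By definition of $\tilde{(\cdot)}$, this says $\tilde f^{(j)}(Sx) = \tilde f^{(j)}(x)$ for all $x$, which in particular gives $\forall x (\tilde f^{(j)}(x) = 0 \rightarrow \tilde f^{(j)}(Sx) = 0)$. Together with $\tilde f^{(j)}(0) = 0$ from the preliminaries, this is exactly the hypothesis of induction for the formula $\tilde f^{(j)}(x) = 0$, and since $\deg \tilde f^{(j)} \leqslant n - j < n$, the outer induction hypothesis applies and yields $\forall x (\tilde f^{(j)}(x) = 0)$.

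The step I expect to be the main obstacle is not conceptually subtle but does require care: we have to make sure that ring identities such as $\tilde f^{(j)}(Sx) = \tilde f^{(j)}(x)$ and $\tilde f^{(j)}(0) = 0$, which live in $\widetilde{\mathcal M}$, translate back into honest semiring equations $s(x, \vec y) = t(x, \vec y)$ over $\mathcal M$, so that the outer induction hypothesis (which speaks about induction in $\mathcal M$) genuinely applies. This is a matter of systematically separating the positive and negative parts of the coefficients of $\tilde f^{(j)}$, but once that translation is fixed, the downward induction above goes through cleanly.
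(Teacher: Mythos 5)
Your proof is correct and is essentially the paper's proof: your downward induction on $j = n - k$ is just a reparametrization of the paper's upward induction on $k$, with the base case $j = n$ handled by the degree argument and each step invoking the outer induction hypothesis on $\deg f$ (since $\deg \tilde f^{(j)} < n$). The only cosmetic difference is that you obtain $\tilde f^{(j)}(0) = 0$ via the closed binomial formula, whereas the paper derives $\tilde f^{(k)}(m) = 0$ for standard $m,k$ directly by iterating the definition of the difference operator; both are routine and equivalent.
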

    
    \begin{proof}
        Note that $\tilde f^{(n)}$ is a constant (since $\deg \tilde g < \deg g$) and $\forall m, k \in \mathbb N (\tilde f^{(k)}(m) = 0)$ (since by usual induction $f(\mathbb N) = \{0\}$ and $g(x) = g(Sx) \rightarrow \tilde g(x) = 0$).
    
        Induction on $k$.
        
        If $k = 0$, then $\tilde f^{(n - k)}$ is a zero constant (by above observations).
        
        Let $k + 1 < n$. Then $\tilde f^{(n - (k + 1))}(0) = 0$ and $\tilde f^{(n - (k + 1))}(x) = 0 \rightarrow \tilde f^{(n - (k + 1))}(Sx) = 0$ (by the induction hypothesis). Since $\deg \tilde f^{(n - (k + 1))} < n$, we can apply induction axiom to the formula $\tilde f^{(n - (k + 1))}(x) = 0$ and obtain that for all $x \in M$ $\tilde f^{(n - (k + 1))}(x) = 0$.
    \end{proof}
    
    Now, suppose that there exists $x_0 \in M$ such that $f(x_0) \ne 0$. Consider the formula $f(x) - f(x_0) \ne 0$. Then, $f(0) - f(x_0) \ne 0$ and $f(x) - f(x_0) \ne 0 \rightarrow f(Sx) - f(x_0) \ne 0$ (since $(f(Sx) - f(x_0) - (f(x) - f(x_0)) = \tilde f^{(1)} (x) = 0$). Since $\mathcal{M} \vDash \mathsf{I}(\ne)$, we obtain $\forall x \in M (f(x) - f(x_0) \ne 0)$. It is a contradiction since we can substitute $x_0$ instead of $x$.
\end{proof}

\section{Remaining questions}

In this section we formulate some remaining problems.

\begin{problem}
    Does $\mathsf I(=)$ follow from $\mathsf I(\ne)$?
\end{problem}

If the answer to Problem 1 is negative (i.e. $\mathsf I(\ne) \nvdash \mathsf I(=)$), then by Theorem 3.3 any countermodel must have noncommutative addition. 

We introduce a structure with noncommutative operations. Informally speaking, this is an analogue of the $\mathbb Z[X]$, but with noncommutative operations. 
Clearly, since the commutativity of operations is provable in $\mathsf I(=)$, this structure will not be a model of $\mathsf I(=)$. 

Consider all formal sums of the form $a_1 X^{i_1} + \dots a_n X^{i_n}$ (the order of the sum is significant and we allow the sum to be empty), where $a_j \in \mathbb Z$ and $i_j \in \mathbb N$. We introduce the following reductions of such sums: 
$$a_1 X^{i_1} + \dots + a_{j - 1} X^{i_{j - 1}} + 0 X^{i_j} + a_{j + 1} X^{i_{j + 1}} + \dots + a_n X^{i_n} \mapsto a_1 X^{i_1} + \dots + a_{j - 1} X^{i_{j - 1}} + a_{j + 1} X^{i_{j + 1}} + \dots + a_n X^{i_n}$$

and 
$$a_1 X^{i_1} + \dots + a_{j - 1} X^{i_{j - 1}} + a_{j} X^{i_{j - 1}} + \dots + a_n X^{i_n} \mapsto a_1 X^{i_1} + \dots + (a_{j - 1} + a_{j}) X^{i_{j - 1}} + \dots + a_n X^{i_n}.$$

Let $\sim$ be the least equivalence relation, containing $\mapsto$. 

\begin{definition}
    A sum $A$ is in \textit{normal form} (NF) if there is no $B$ such that $A \mapsto B$.
\end{definition}

\begin{remark}
    It is easy to see that for every sum $A$ there is a unique sum $B$ such that $A \sim B$ and $B$ is in NF.
\end{remark}

We will consider these sums up to $\sim$.

Operations are introduced in the following way: 
$$(a_1 X^{i_1} + \dots + a_n X^{i_n}) + (b_1 X^{j_1} + \dots + b_m X^{j_m}) = a_1 X^{i_1} + \dots + a_n X^{i_n} + b_1 X^{j_1} + \dots + b_m X^{j_m},$$
$$S A = A + 1,$$

if $ b \geqslant 0$:
$$(a_1 X^{i_1} + \dots + a_n X^{i_n}) \cdot b X^j = \underbrace{(a_1 X^{i_1 + j} + \dots + a_n X^{i_n + j}) + \dots + (a_1 X^{i_1 + j} + \dots + a_n X^{i_n + j})}_{b \textit{ times}}, $$

if $b < 0$:
$$(a_1 X^{i_1} + \dots + a_n X^{i_n}) \cdot b X^j = \underbrace{(-a_n X^{i_n + j} - \dots - a_1 X^{i_1 + j}) + \dots + (-a_n X^{i_n + j} - \dots - a_1 X^{i_1 + j})}_{|b| \textit{ times}}, $$

$$A \cdot (b_1 X^{j_1} + \dots + b_m X^{j_m}) = A \cdot b_1 X^{j_1} + \dots + A \cdot b_m X^{j_m}.$$

As we can see, the result of the operations respects the equivalence relation introduced above.

Let us call this structure $\mathcal M$. To get from $\mathcal M$ a model of $\mathsf Q$, we need to take only the <<nonnegative>> (positive and zero) elements of $\mathcal M$. We call a sum \emph{positive} if in its normal form the sum of all coefficients before $X$'s with the greatest degree is positive. For example, $-X + X^2$ and $-X^2 + X + 2X^2$ are positive, but $X - X^2$ is not. It is easy to see that the sum and the product of any two nonnegative sums is nonnegative. As usual, we denote the substructure of nonnegative elements $\mathcal M^+$. 
Now, $\mathcal M^+ \vDash \mathsf Q$.

\begin{hypothesis}
    The introduced structure $\mathcal M^+$ is a model of $\mathsf I(\ne)$.
\end{hypothesis}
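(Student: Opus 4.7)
The plan is to argue by contradiction. Assume the induction axiom fails in $\mathcal M^+$ for some formula $s(x, \vec y) \ne t(x, \vec y)$: both $s(0, \vec y) \ne t(0, \vec y)$ and the step $\forall x \bigl(s(x, \vec y) \ne t(x, \vec y) \to s(Sx, \vec y) \ne t(Sx, \vec y)\bigr)$ hold, yet $s(x_0, \vec y) = t(x_0, \vec y)$ for some $x_0 \in \mathcal M^+$. Using the contrapositive of the step together with (Q3) (predecessor exists, and is unique by (Q2)), one produces $\mathcal M^+$-equalities $s(x_0 - n, \vec y) = t(x_0 - n, \vec y)$ for every $n \in \mathbb N$ such that $x_0 - n$ is still defined. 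If $x_0 \in \mathbb N$, this descending chain reaches $0$ in finitely many steps and contradicts the base case directly; so one may assume $x_0$ is nonstandard, which in this structure amounts to its abelianization having positive $X$-degree, and then $x_0 - n \in \mathcal M^+$ for every $n \in \mathbb N$.

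Next I would exploit the natural projection $\pi : \mathcal M^+ \to \mathbb Z[X]^+$ sending each formal sum to its corresponding element of $\mathbb Z[X]$. A direct inspection of the defining clauses for $+$ and $\cdot$ in $\mathcal M^+$ shows that $\pi$ is a semiring homomorphism, so the infinitely many $\mathcal M^+$-equalities push down to $s(\pi(x_0) - n, \pi(\vec y)) = t(\pi(x_0) - n, \pi(\vec y))$ in $\mathbb Z[X]^+$ for every $n \in \mathbb N$. Since $\pi(x_0)$ has positive $X$-degree, the Vandermonde argument already performed in the proof of Proposition 3.3(i) for $\mathbb Z[X]^+$ forces the formal polynomial identity $s(z, \pi(\vec y)) \equiv t(z, \pi(\vec y))$ in $\mathbb Z[X][z]$; in particular, $\pi(s(0, \vec y)) = \pi(t(0, \vec y))$.

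The main obstacle is then upgrading this abelianization-level identity to the $\mathcal M^+$-level equality $s(0, \vec y) = t(0, \vec y)$ which is what one actually needs to contradict the base case. Abelianization is strictly lossy --- for instance $1 + X$ and $X + 1$ are distinct elements of $\mathcal M^+$ with the same image under $\pi$ --- so $\pi(s(0, \vec y)) = \pi(t(0, \vec y))$ by itself is insufficient. The plan here is to squeeze finer information directly out of the original infinitely many $\mathcal M^+$-equalities $s(x_0 - n, \vec y) = t(x_0 - n, \vec y)$, showing that the full normal forms of both sides agree as ordered sequences of monomials, not merely as multisets of $X$-coefficients. A natural device would be to pick $x_0$ of very large $X$-degree, say $x_0 = X^K$ with $K$ exceeding the degree of every term appearing in $s$, $t$, and $\vec y$, so that in the normal form of $s(x_0 - n, \vec y)$ the monomials contributed by the various syntactic occurrences of $x$ appear as well-separated blocks, disentangled from the $\vec y$-parts; an induction on term complexity could then be attempted to recover the structural correspondence between $s$ and $t$ from these normal forms and propagate the equality down to $x = 0$.

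I expect the crux of the argument to lie in exactly this combinatorial bookkeeping. Multiplication in $\mathcal M^+$ is neither left-distributive nor commutative, so the normal form of $s(a, \vec y)$ is a subtle, order-sensitive function of the term structure of $s$, with no polynomial expansion to lean on, and the induction must track precisely how the substitution $x \mapsto x_0 - n$ threads through these noncommutative word patterns. Whether such an analysis can actually be pushed through is precisely what remains open in the conjecture.
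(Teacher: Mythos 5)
The statement you are attempting to prove is left \emph{open} in the paper: it is stated as Conjecture~1 in Section~4, and the structure $\mathcal M^+$ is introduced precisely because the author does not know how to verify that it satisfies $\mathsf I(\ne)$. So there is no proof in the paper to compare against, and your submission is --- as you yourself say candidly in the final paragraph --- a strategy sketch with an unresolved core, not a proof.

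That said, your diagnosis of both the promising ingredients and the genuine obstruction is accurate. The downward descent is fine: a nonstandard $x_0 \in \mathcal M^+$ has a normal form of maximal degree $\geq 1$ with positive leading coefficient sum, so $x_0 - n$ stays in $\mathcal M^+$ for every $n \in \mathbb N$. The abelianization $\pi : \mathcal M^+ \to \mathbb Z[X]^+$ is indeed a semiring homomorphism (one checks each multiplication clause of $\mathcal M^+$ commutes with $\pi$, including the sign-reversal clause for $b < 0$), and term evaluation commutes with $\pi$, so the Vandermonde argument of Proposition~3.3(i) transfers and yields $\pi(s(0, \vec y)) = \pi(t(0, \vec y))$. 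But, exactly as you observe, this is strictly weaker than the needed $\mathcal M^+$-equality $s(0, \vec y) = t(0, \vec y)$, since $\pi$ collapses distinct normal forms such as $1 + X$ and $X + 1$. Your suggested remedy --- choose $x_0 = X^K$ for $K$ larger than all degrees appearing in $\vec y$ and track ordered monomial blocks through the substitution --- is a sensible direction, but you have not carried it out, and it is not obvious it can be: multiplication in $\mathcal M^+$ is only right-distributive over $+$, so the normal form of $s(x_0 - n, \vec y)$ does not factor cleanly into contributions from the syntactic subterms of $s$, and the order-sensitive cancellations in normal forms can differ across $n$. That combinatorial bookkeeping is precisely the open content of the conjecture, and your write-up correctly stops at it rather than pretending to close it.
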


If this hypothesis turns out to be true, then $\mathsf I(\ne) \nvdash \mathsf I(=)$.

\begin{problem}
    Is $\mathsf I(=, \ne)$ equivalent to $\mathsf{IOpen}(=)$ (induction for quantifier-free formulas, containing only atomic formulas of the form $s = t$)?
\end{problem}

There is a following result on the alternative axiomatization of $\mathsf{IOpen(=)}$, which can help in solving this problem.

\begin{theorem}[\cite{shepherdson:1967}, Theorem 2]
    $\mathsf{IOpen(=)}$ is equivalent to the theory, consisting of $\mathsf Q$, commutativity, associativity and distributivity of addition and multiplication, and the scheme of axioms of the form
    $$\underline{d} x = \underline{d} x' \rightarrow \forall y \bigvee\limits_{i = 0}^{d - 1} ((y + i) x = (y + i) x')$$
    for all $d \geqslant 2$, where $\underline{d} = S^d(0)$.
\end{theorem}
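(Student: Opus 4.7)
The plan is to prove both directions of the equivalence $\mathsf{IOpen(=)} \equiv T$, where $T$ denotes the axiomatization on the right-hand side.

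For the forward direction $\mathsf{IOpen(=)} \vdash T$, the commutativity, associativity, and distributivity of addition and multiplication follow from $\mathsf{I(=)} \subseteq \mathsf{IOpen(=)}$ by the remark after Proposition 1.1. The main task is to derive each instance of the schema $(\ast)_d : \underline{d}x = \underline{d}x' \to \forall y \bigvee_{i=0}^{d-1}((y+i)x = (y+i)x')$. Fixing $x, x'$ with $\underline{d}x = \underline{d}x'$, I would apply induction on $y$ to the QF-equality formula $\psi(y) := \bigvee_{i=0}^{d-1}((y+i)x = (y+i)x')$. The base case $\psi(0)$ holds via the $i=0$ disjunct $0 \cdot x = 0 \cdot x'$. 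For the step, assume $(y+i)x = (y+i)x'$ for some $i$: if $i \geq 1$, the witness $i - 1$ works at $Sy$ since $(Sy + (i-1))x = (y+i)x$; if $i = 0$, then $yx = yx'$, and using $\underline{d}x = \underline{d}x'$ gives $(Sy + d - 1)x = yx + \underline{d}x = yx' + \underline{d}x' = (Sy + d - 1)x'$, so $d - 1$ witnesses $\psi(Sy)$.

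For the reverse direction $T \vdash \mathsf{IOpen(=)}$, I would establish induction for an arbitrary QF-equality formula $\varphi(x, \vec y)$. Thanks to the ring axioms provided by $T$, every term is equivalent to a polynomial, so every atom $s = t$ of $\varphi$ becomes an equation $P(x, \vec y) = 0$ in the extended ring $\widetilde{\mathcal M}$ constructed as in Section 2. I would then put $\varphi$ into DNF and reduce each clause to a single polynomial equation (conjunctions of equalities collapse via sums of squares in $\widetilde{\mathcal M}$, and negations split into a finite case analysis over the nonvanishing coefficients). The core is then a degree induction on the polynomial $P$: given $P(0) = 0$ and $\forall x (P(x) = 0 \to P(Sx) = 0)$, one examines the step polynomial $Q(x) := P(Sx) - P(x)$, of strictly smaller degree, and its integer content produces the factor $\underline{d}$ relevant to the situation. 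The schema $(\ast)_d$ then supplies exactly the torsion control needed to conclude $\forall x (P(x) = 0)$ from the hypotheses.

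The main obstacle is this reverse direction, specifically the reduction of induction for an arbitrary Boolean combination of polynomial equalities to a finite collection of instances of $(\ast)_d$. One must check that all the polynomial manipulations --- DNF, collapsing conjunctions to single equations, and the degree induction with its controlled division of coefficients --- can be performed while staying within $T$, even though the natural setting is the ring $\widetilde{\mathcal M}$ rather than the semiring $\mathcal M$. The precise shape of the schema (a disjunction indexed by $i \in \{0, \dots, d-1\}$) is tailored to cover exactly this gap, so the argument really amounts to careful bookkeeping of the $d$ needed at each inductive step.
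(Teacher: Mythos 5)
The paper does not prove this theorem; it is cited from Shepherdson 1967 as background for Problem 2, so there is no in-paper proof to compare against. I'll therefore assess your argument on its own merits.

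Your forward direction is correct. The key observation is that $\psi(y) := \bigvee_{i=0}^{d-1}\bigl((y+i)x = (y+i)x'\bigr)$, with $x, x'$ as parameters, is a quantifier-free formula built only from equality atoms, so it falls under $\mathsf{IOpen(=)}$-induction; and your bookkeeping of the witness ($i-1$ when $i \geq 1$; wrap-around to $d-1$ using $\underline{d}x = \underline{d}x'$ and distributivity when $i = 0$) is sound, given that commutativity, associativity and distributivity are available in $\mathsf{IOpen(=)}$ via $\mathsf{I(=)}$.

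The reverse direction, however, contains several genuine gaps, and you are right that this is where the theorem actually lives. First, collapsing $P = 0 \wedge Q = 0$ to $P^2 + Q^2 = 0$ is only valid in a formally real setting (sums of squares vanish iff each summand vanishes). Neither $T$ nor $\mathsf{IOpen(=)}$ itself contains any induction for $\leqslant$-formulas, so you do not have Proposition~1.1(7) (totality of the order), hence no ordered-ring structure, hence no justification for this collapse. Second, you invoke the ring $\widetilde{\mathcal M}$, but that construction requires additive cancellation (Proposition~1.1(6)); the paper explicitly notes $\mathsf{I(=)} \nvdash x + z = x + y \rightarrow z = y$, and neither $\mathsf{Q}$, the ring identities, nor the schema $(\ast)_d$ obviously yields cancellation, so passing to $\widetilde{\mathcal M}$ is not available for free. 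Third, the phrase ``its integer content produces the factor $\underline{d}$ relevant to the situation'' is a placeholder, not an argument; showing that the reduction of a generic Boolean combination of polynomial equalities to finitely many instances of $(\ast)_d$ actually works is precisely the nontrivial content of Shepherdson's theorem, and your sketch does not indicate how to carry it out.

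As a sanity check that makes the gap vivid: the paper's model $\mathcal M = \mathbb N \cup \{\omega\}$ from Proposition~3.1 satisfies $\mathsf Q$ and all the ring identities, yet it has $S\omega = \omega$ and no additive cancellation; moreover the hypothesis $\underline{d}x = \underline{d}x'$ holds there only when $x = x'$, so $(\ast)_d$ is vacuous on it. Any correct proof of the reverse direction must explain how $T$ nonetheless forces $Sx \ne x$ and cancellation (or else how the cited axiomatization must be read to exclude such models). Your sketch does not touch this point, and it is exactly the step you would need to make precise.
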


\bibliographystyle{unsrt}
\bibliography{references}

\end{document}